\newcommand{\na}{\nabla}
\newcommand{\vol}{\text{Vol}}
\newtheorem{theorem}{Theorem}[section]
\newtheorem{lemma}{Lemma}[section]
\newtheorem{proposition}{Proposition}[section]
\numberwithin{equation}{section}
\theoremstyle{remark}
\newtheorem{remark}{Remark}[section]
\theoremstyle{definition}
\begin{document}
\title[Ricci Expanders with finite asymptotic scalar curvature ratio]
{Complete gradient expanding Ricci solitons with finite asymptotic scalar curvature ratio}
\author{HUAI-DONG CAO$^{\dagger}$, Tianbo Liu and Junming Xie}
\address{Department of Mathematics,  Lehigh University,
Bethlehem, PA 18015, USA}
\email{huc2@lehigh.edu; til615@lehigh.edu; jux216@lehigh.edu}

\thanks{$^{\dagger}$Research partially supported by a Simons Foundation Collaboration Grant (\#586694 HC)}

\begin{abstract}
	Let $(M^n, g, f)$, $n\ge 5$, be a complete gradient expanding Ricci soliton with nonnegative Ricci curvature $Rc\geq 0$. In this paper, we show that if the asymptotic scalar curvature ratio of $(M^n, g, f)$ is finite (i.e., $ \limsup_{r\to \infty} R  r^2< \infty $), then the Riemann curvature tensor must have at least sub-quadratic decay, namely, $\limsup_{r\to \infty} |Rm| \ \! r^{\alpha}< \infty$ for any $0<\alpha<2$. 
\end{abstract}

\maketitle
\date{}

\section{Introduction}

This is a sequel to the earlier paper \cite{CaoLiu21} by the first and the second authors in which curvature estimates were obtained for $4$-dimensional complete gradient expanding Ricci solitons.  
By scaling the metric $g$ if necessary, we shall assume throughout the paper that a gradient expanding Ricci soliton $(M^n, g, f)$ satisfies the  equation 
\begin{equation} \label{expandingeq}
        Rc+\nabla^2 f=-\frac 1 2  g, 
\end{equation}
where $Rc$ and $\nabla^2 f$ denote the Ricci tensor of $g$ and the Hessian of the potential function $f\in C^{\infty}(M)$, respectively. 

For any $4$-dimensional complete gradient expanding Ricci soliton $(M^4, g, f)$ with nonnegative Ricci curvature  $Rc\ge 0$, it was shown in \cite{CaoLiu21} that there exists a constant $C>0$  such that, for any 
$0\leq a<1$, the following curvature estimate hold on $M^4$, 
\begin{equation*} 
|Rm|  \le \frac {C} {1-a} R^a.
\end{equation*} 
Moreover,  if the scalar curvature $R$ has at most polynomial decay, then 
\begin{equation} 
{|Rm|} \le C R  \quad on \ M^4.
\end{equation}
On the other hand, if the asymptotic scalar curvature ratio of $(M^4, g)$ is finite, i.e.,
	\begin{equation*} 
	\limsup_{r\to \infty} R  r^2< \infty, 
	\end{equation*} 
 then $(M^4, g)$ has finite  asymptotic curvature ratio
\begin{equation} 
		A:= \limsup_{r\to \infty} |Rm| r^2< \infty. 
\end{equation}
As an application, it follows from the above result and the work of Chen-Deruelle \cite{ChenDer} that any 4-dimensional complete noncompact non-flat gradient expanding Ricci soliton with nonnegative Ricci curvature and finite asymptotic scalar curvature ratio must have a $C^{1,\alpha}$ asymptotic cone structure at infinity, for any $\alpha \in (0, 1)$.

We remark that recent progress on curvature estimates for 4-dimensional gradient Ricci solitons has been led by the work of Munteanu-Wang \cite{MW15},  in which they proved that any complete gradient shrinking soliton with bounded scalar curvature  $R$ must have bounded Riemann curvature tensor $Rm$. More significantly, they showed that the Riemann curvature tensor is controlled by the scalar curvature by $|Rm|\le C R$ so that if the scalar  curvature $R$ decays at infinity so does the curvature tensor $Rm$. Moreover, by exploring the differential equation $\Delta_f R=R-2|Rc|^2$ satisfied by shrinking solitons and combining with the scalar curvature lower bound of Chow-Lu-Yang \cite{CLY}, they showed that the scalar  curvature $R$ in fact must decay quadratically if $R$ goes to zero at infinity. It then follows that the curvature tensor $Rm$ must decay quadratically, hence the 4D shrinking soliton is asymptotically conical. Their curvature estimate, together with the uniqueness result of Kotschwar-Wang \cite{KW15}, has played a crucial role in the recent advance of classifying 4-dimensional complete gradient Ricci solitons, as well as in the classification of complex 2-dimensional complete gradient K\"ahler-Ricci solitons with scalar curvature going to zero at infinity by Conlon-Deruelle-Sun \cite{CDS19}. See  \cite {Cao et al2} for an extension, and also \cite{CaoCui, Chan1} and \cite{Cao2021} for similar curvature estimates in the steady soliton case.

\medskip
In  \cite{MW17}, via the Moser iteration and a tour de force of integral estimates, Munteanu and Wang also obtained the curvature estimate for higher dimensional gradient shrinking Ricci solitons. Precisely, they showed that if the Ricci curvature of an $n$-dimensional ($n\ge 5$) complete gradient {\em shrinking Ricci soliton} goes to zero at infinity, then its Riemann curvature tensor $Rm$ must also go to zero at infinity. Furthermore, based on $|Rm| \to 0$ at infinity and the fact that the curvature tensor of Ricci shrinkers satisfy the differential inequality $\Delta_f |Rm| \geq |Rm| -c |Rm|^2$, they were able to show that $Rm$ has to decay quadratically at infinity by using the maximum principle argument. 

In this paper, inspired by the work of Munteanu-Wang \cite{MW17}, we investigate curvature estimates for higher dimensional gradient expanding Ricci solitons with nonnegative Ricci curvature and finite 
asymptotic scalar curvature ratio. Our main result is the following

\begin{theorem} \label{maintheorem}
	Let $(M^n, g, f)$,  $n\ge 5$, be an $n$-dimensional complete gradient expanding Ricci soliton with nonnegative Ricci curvature $Rc\geq 0$ and finite asymptotic scalar curvature ratio
	\begin{equation} \label{fas}
		\limsup_{r\to \infty}  R \ \!r^2< \infty,
	\end{equation}
	where $r=r(x)$ is the distance function to a fixed base point $x_0\in M$. 
	Then $(M^n, g, f)$ has finite $\alpha$-asymptotic curvature ratio for any $0<\alpha<2$, 
	\begin{equation} \label{maindecay}
		A_{\alpha} := \limsup_{r\to \infty} |Rm| \ \! r^{\alpha}< \infty.
	\end{equation}
Furthermore, there exist constant $C>0$ depending on $n$ and the geometry of $(M^n, g, f)$, sequences $\{r_j\} \to \infty$ and $\{\alpha_j\} \to 2$ such that 
\begin{equation*} 
	|Rm|(x) \leq  C (r(x)+1)^{-\alpha_j}
\end{equation*}
for any $x \in M\setminus B(x_0, r_j+1)$. 
\end{theorem}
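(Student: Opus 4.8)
The plan is to transplant to the expanding setting the integral-estimate-plus-iteration strategy that Munteanu--Wang \cite{MW17} used for higher-dimensional shrinkers, with the hypotheses $Rc\ge 0$ and \eqref{fas} supplying what is needed to overcome the two features that make expanders genuinely harder. I would first record the standard soliton identities attached to \eqref{expandingeq}: the traced equation $R+\Delta f=-\tfrac n2$, the identity $\nabla R=2Rc(\nabla f)$, and $R+|\nabla f|^2+f\equiv\mathrm{const}$. Since $Rc\ge 0$ gives $\nabla^2(-f)\ge\tfrac12 g$, the function $-f$ is proper and strictly convex, and combining this with $R\ge 0$ one gets the Hamilton-type asymptotics $-f\asymp\tfrac14 r^2$ and $|\nabla f|\asymp\tfrac12 r$ near infinity, while Bishop--Gromov gives $\Vol(B(x_0,r))\le C r^n$. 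In particular \eqref{fas} says $R\le C(1+r)^{-2}\asymp C(-f)^{-1}$ and, because $Rc\ge 0$ forces $Rc(\nabla f,\nabla f)\le R|\nabla f|^2$, the quantity $Rc(\nabla f,\nabla f)$ is \emph{bounded} on $M$. The two differential relations I will use are the exact identity $\Delta_f R=-R-2|Rc|^2\le 0$ and, from the evolution of $Rm$ under the soliton flow together with Kato's inequality, $\Delta_f|Rm|\ge-|Rm|-c_n|Rm|^2$.

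The core of the argument is a family of weighted integral estimates. Here the first point of departure from the shrinking case is that $\int_M e^{-f}\,dV=\infty$ for an expander, so the Munteanu--Wang weight $e^{-f}\,dV$ is unavailable; instead one works with $e^{f}\,dV$, which is \emph{finite} since $e^{f}\asymp e^{-r^2/4}$ and the volume grows polynomially, and whose natural Laplacian is the conjugate drift operator $\Delta_{-f}=\Delta+\nabla f\cdot\nabla$. Rewriting the soliton identity as $\Delta_{-f}R=-R-2|Rc|^2+4Rc(\nabla f,\nabla f)$, testing it against $e^{f}$ over the bounded sublevel sets $\{f>-\rho\}$, and letting $\rho\to\infty$: the boundary terms on $\{f=-\rho\}$ are $O(e^{-\rho}\rho^{n/2})\to 0$ by the boundedness of $Rc(\nabla f,\nabla f)$ and the polynomial area growth, while the bulk term $\int 4Rc(\nabla f,\nabla f)e^{f}$ is finite for the same reason; this yields $\int_M(R+|Rc|^2)e^{f}\,dV<\infty$. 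Feeding this into the $\Delta_f$-equations satisfied by $Rc$ and $Rm$ (interior elliptic estimates, plus further integrations by parts with cutoffs adapted to the level sets of $f$) promotes it to weighted $L^p$ and derivative bounds for $Rm$ near infinity, and a Moser iteration --- carried out on balls over which $f$ oscillates only boundedly, with the variation of $e^{f}$ across them accounted for --- converts these into the qualitative conclusion that $|Rm|$ is bounded on $M$ and $|Rm|\to 0$ at infinity, together with, after more care, a first polynomial rate $|Rm|\le C(1+r)^{-\beta_0}$ outside some ball.

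Finally I would bootstrap the exponent. Inserting $|Rm|\le C(1+r)^{-\beta}$ with $\beta<2$ back into the scheme makes the quadratic term $c_n|Rm|^2$ negligible next to $|Rm|$ and sharpens the tails of the weighted integrals; rerunning the integral/Moser step with this improved input produces $|Rm|\le C'(1+r)^{-\beta'}$ with $\beta'=\min\{\beta+\delta,\,2-\eta\}$, where $\delta>0$ is a fixed gain, $\eta>0$ is arbitrarily small, and both $C'$ and the radius beyond which the estimate holds can be kept under control. After finitely many passes one reaches any prescribed $\alpha<2$, which is \eqref{maindecay}; running the iteration with $\eta=\eta_j\to 0$ and recording, at the $j$-th stage, the radius $r_j$ past which the bound holds, with a common constant $C$, gives the refined statement with $\alpha_j\to 2$. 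I expect the genuine obstacle --- and the reason the method halts at $\alpha<2$ rather than the sharp $\alpha=2$ reached in dimension $4$ in \cite{CaoLiu21} --- to be the \emph{unfavorable sign} of the reaction term for expanders: $\Delta_f|Rm|\ge-|Rm|-c_n|Rm|^2$ exhibits $|Rm|$ as a subsolution of $\Delta_f+1$, an operator that already admits the unbounded subsolution $-f$, so there is no analogue of the maximum-principle barrier that upgrades $|Rm|\to 0$ to quadratic decay in the shrinking case, and the decay rate must instead be extracted one bounded step at a time from the integral estimates.
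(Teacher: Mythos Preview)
Your outline captures the broad shape (integral estimates, then Moser iteration) and correctly diagnoses why the method halts at $\alpha<2$, but the specific scheme you propose has a genuine gap. Working with the exponential weight $e^{f}\,dV$ cannot by itself produce polynomial decay: even if you obtain $\int_M |Rm|^p\,e^{f}<\infty$, on a unit ball at distance $r(x)$ this only says $\int_{B_x(1)}|Rm|^p\le C\,e^{-f(x)}\asymp C\,e^{r(x)^2/4}$, which blows up and is useless as input to Moser iteration. The step where you pass from $\int_M(R+|Rc|^2)e^{f}<\infty$ to ``a first polynomial rate $|Rm|\le C(1+r)^{-\beta_0}$'' is exactly where the real content lies, and nothing in your sketch explains how any polynomial rate emerges from an exponentially weighted bound. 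Your bootstrap $\beta\mapsto\beta+\delta$ then presupposes what is missing.

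The paper avoids this by using a \emph{polynomial} weight from the start: with $F=-f+\tfrac n2\asymp\tfrac14 r^2$, it proves directly (Proposition~3.1) that $\int_M|Rm|^pF^a\le c(p)$ for every $a>0$ and $p>a+R_0+\tfrac n2+c$. Restricting to a unit ball gives $\int_{B_x(1)}|Rm|^p\le c(p)\,r(x)^{-2a}$, and a single Moser iteration (uniform Sobolev constant because $Rc\ge0$ forces positive asymptotic volume ratio) yields $|Rm|(x)\le C\,r(x)^{-2(a-n)/p}$. Taking $a=p-\mathrm{const}$ and letting $p\to\infty$ reaches every $\alpha<2$ in one pass --- there is no bootstrap. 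The integral bound itself is obtained via three interlocking lemmas: the second Bianchi identity together with $\nabla^2F\ge\tfrac12 g$ controls $\int|Rm|^pF^a\phi^q$ by gradient terms $\int|\nabla Rc|^2|Rm|^{p-1}F^{a+1}\phi^q$ and $\int|\nabla Rm|^2|Rm|^{p-3}F^{a-1}\phi^q$; these are in turn bounded by $\tfrac{c}{p^k}\int|Rm|^pF^a\phi^q+c(p)$, where the quadratic Ricci decay from \eqref{fas} is used precisely to neutralize the climbing power of $F$. For $p$ large the coefficients $c/p^k$ are absorbed.
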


We point out that, compared to the shrinking case, there are several essential differences in the expanding case. 
First of all, certain integrals in the key integral estimate, which were good terms in the shrinking case, turned into potential trouble terms (see Remark \ref{remark3.2}) and they prevented us from obtaining the pointwise $Rm$ decay by merely assuming the Ricci curvature goes to zero at infinity. Secondly, in the expanding case, the assumption of $Rc\ge 0$ is essential  to ensure a uniform lower bound for the Sobolev constant of unit geodesic balls $B_x(1)$ for all $x\in M$ (see Lemma \ref{sobolev} and Lemma \ref{avr}), or a uniform non-collapsing estimate for $B_x(1)$ (see Lemma \ref{non-collapsing}), that is crucial for the Moser iteration to work. Finally, the corresponding differential inequality for $|Rm|$  in the expanding case becomes
\[\Delta_f |Rm| \geq -|Rm| -c |Rm|^2,\]
from which the maximum principle argument does not seem to work for getting the quadratic decay as in the shrinking case, or any improved decay for $Rm$, even knowing $Rm$ goes to zero at infinity.
Nevertheless, by adapting the integral estimates in \cite{MW17} and using the Moser iteration, we are able to obtain the sub-quadratic decay 
(\ref{maindecay}) for $|Rm|$ under the assumption of finite asymptotic scalar curvature ratio (\ref{fas}).

\begin{remark}
The same proof can be used to show that if
the rate of decay for the scalar curvature $R$ is in the order of $\alpha$, with $0<\alpha\leq 2$, then $Rm$ would have sub-$\alpha$ decay (see Theorem \ref{subalphadecaythm}).
\end{remark}

\begin{remark}
Unlike the works of Munteanu and Wang \cite{MW15, MW17} for shrinking Ricci solitons,  it seems that the best one can hope to prove in the expanding case is for $Rm$ to have the same decay rate as assumed for the Ricci curvature (or the scalar curvature). It remains an interesting question if one can improve the arbitrary sub-quadratic decay for $Rm$ in Theorem \ref{maintheorem} to the quadratic decay.  
\end{remark}

\medskip 
\noindent {\bf Acknowledgements.} We would like to thank Ovidiu Munteanu and Jiaping Wang for their interests in this work and their helpful comments and suggestions.  We are also grateful to the referee for the careful reading of our paper and for providing valuable suggestions which led to a simpler version of Lemma 3.1 and a more streamlined proof of Lemma 3.2 and Lemma 3.3 than in the previous version.

\section{Preliminaries}

In this section, for the reader's convenience, we fix the notations and collect several known results about gradient expanding Ricci solitons that we shall need later. Throughout the paper, we denote by $$Rm=\{R_{ijkl}\}, \quad Rc=\{R_{ik}\},\quad R $$ the Riemann curvature tensor, the Ricci tensor, and the scalar curvature of the metric $g=g_{ij}dx^idx^j$ in local coordinates $(x^1, \cdots, x^n)$, respectively.

\begin{lemma} {\bf (Hamilton \cite{Ha95F})} Let $(M^n, g, f)$
	be a complete gradient expanding Ricci soliton satisfying Eq. (1.1).
	Then
	$$ R+\Delta f =-\frac n 2,$$
	$$\nabla_iR=2R_{ij}\nabla_jf, $$
	$$R+|\nabla f|^2=-f +C_0 $$ for some constant $C_0$.
\end{lemma}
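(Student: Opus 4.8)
The plan is to derive the three identities in sequence, each starting from the soliton equation \eqref{expandingeq} written in index form as $R_{ij}+\nabla_i\nabla_j f=-\tfrac12 g_{ij}$ and using the second Bianchi identity.

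First I would obtain the trace identity $R+\Delta f=-\tfrac n2$ by simply contracting the soliton equation with $g^{ij}$: the curvature term traces to the scalar curvature $R$, the Hessian term traces to $\nabla^j\nabla_j f=\Delta f$, and the right-hand side gives $-\tfrac12 g^{ij}g_{ij}=-\tfrac n2$. This is immediate and requires no further input.

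Next, for $\nabla_i R=2R_{ij}\nabla_j f$, I would take the divergence of the soliton equation in the $j$-index. On the curvature side, the contracted second Bianchi identity gives $\nabla^j R_{ij}=\tfrac12\nabla_i R$. On the Hessian side I must commute derivatives: $\nabla^j\nabla_i\nabla_j f=\nabla_i\nabla^j\nabla_j f+R_{ij}\nabla_j f=\nabla_i(\Delta f)+R_{ij}\nabla_j f$, where the curvature correction comes from the Ricci identity applied to the one-form $\nabla f$. The right-hand side $-\tfrac12 g_{ij}$ is parallel, so its divergence vanishes. Combining these and substituting $\nabla_i(\Delta f)=-\nabla_i R$ (from differentiating the first identity) collapses the expression to $\tfrac12\nabla_i R-\nabla_i R+R_{ij}\nabla_j f=0$, i.e. $\nabla_i R=2R_{ij}\nabla_j f$. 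The one genuinely delicate step here — the main obstacle — is fixing the sign and contraction conventions in the commutation formula, since this is precisely where the Ricci tensor is produced and an error there would flip the coefficient.

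Finally, for the last identity I would show that the function $R+|\nabla f|^2+f$ is locally constant by checking its gradient vanishes. Differentiating gives $\nabla_i|\nabla f|^2=2\nabla_j f\,\nabla_i\nabla_j f$; replacing $\nabla_i\nabla_j f$ by $-\tfrac12 g_{ij}-R_{ij}$ from the soliton equation yields $-\nabla_i f-2R_{ij}\nabla_j f$, and invoking the second identity $2R_{ij}\nabla_j f=\nabla_i R$ turns this into $-\nabla_i f-\nabla_i R$. Hence $\nabla_i\bigl(R+|\nabla f|^2+f\bigr)=0$, so by connectedness of $M$ we get $R+|\nabla f|^2=-f+C_0$ for some constant $C_0$. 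The only care needed in this last step is to apply the already-established second identity at the right moment; otherwise the computation is routine.
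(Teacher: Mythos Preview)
Your argument is correct and is precisely the standard derivation of these identities. The paper itself does not supply a proof of this lemma---it is quoted as a result of Hamilton \cite{Ha95F}---so there is nothing to compare against; your write-up would serve perfectly well as the omitted proof.
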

Moreover, replacing $f$ by $f-C_0$, we can normalize the potential function $f$ so that 
\[R+|\nabla f|^2=-f. \]
In the rest of the paper, we shall always assume this normalization.

Furthermore, by setting
$$F=-f+\frac n 2, \eqno (2.1)$$ the expanding soliton equation (1.1) becomes 
$$ \na^2 F =Rc+\frac 1 2 g. \eqno (2.2)$$
From (2.2), Lemma 2.1 and the normalization of $f$, we have 
$$ \nabla R=-2Rc \ \!(\nabla F, \cdot), \qquad |\na F|^2=F-R-\frac n 2, \eqno(2.3)$$
$$ \Delta F= R+ \frac n 2 \qquad  \mbox{and} \qquad \Delta_f F =F \quad ({\mbox {i.e.}},  \ \Delta_f f=f-\frac n 2), \eqno (2.4)$$ 
where $\Delta_f =:\Delta -\nabla f\cdot \nabla$ is the weighted Laplace operator.

Next, we have the following well-known fact about the asymptotic behavior of the potential function of a complete non-compact gradient expanding soliton with nonnegative Ricci curvature (see, e.g., Lemma 5.5 in \cite{Cao et al} or Lemma 2.2 in \cite{ChenDer}).

\begin{lemma} \label{potencialfunction} Let $(M^n, g , f)$ be a complete noncompact gradient expanding Ricci soliton satisfying Eq. (1.1) and with nonnegative Ricci curvature $Rc \ge 0$. Then there exist some constants $c_1 >0$ such that, outside some compact subset of $M^n$, the function $F=-f+n/2$ satisfies the estimates
	\[\frac 1 4(r(x)-c_1)^2\le F(x)\le \frac 1 4 (r(x)+2\sqrt{F(x_0)})^2,   \eqno(2.5)\]
	where $r(x)$ is the distance function from a base point in $M^n$. In particular, $F$ is a strictly convex exhaustion function achieving its minimum at its unique interior point $x_0$, which we shall take as the base point, and the underlying manifold $M^n$ is diffeomorphic to ${\mathbb R}^n$.
	
\end{lemma}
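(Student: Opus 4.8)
# Proof Proposal for Lemma \ref{potencialfunction}

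The plan is to establish the quadratic bounds on $F$ by integrating the gradient estimate $|\nabla F|^2 = F - R - n/2$ along minimal geodesics emanating from a suitably chosen base point, after first verifying that $F$ is strictly convex with a unique interior minimum. The convexity is immediate from (2.2): since $\nabla^2 F = Rc + \frac{1}{2}g$ and $Rc \ge 0$ by hypothesis, we have $\nabla^2 F \ge \frac{1}{2}g > 0$, so $F$ is strictly convex. A strictly convex function on a complete noncompact manifold has at most one critical point, and where it exists this point is the global minimum; since $\Delta F = R + n/2 > 0$ forces $F$ to be unbounded above (it cannot have an interior maximum), standard Morse-theoretic arguments show $F$ is an exhaustion function. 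I would first prove $F$ attains an interior minimum at some point, call it $x_0$, and designate it the base point so that $r(x) = d(x, x_0)$.

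Next I would derive the differential identities that control the growth of $\sqrt{F}$. From (2.3) we have $|\nabla F|^2 = F - R - n/2$. Since $Rc \ge 0$ implies (via the trace) that $R \ge 0$, and since one can check $R \ge -n/2$ holds automatically (indeed $R \ge 0$ here), we obtain the two-sided control
\begin{equation*}
	F - R - \frac{n}{2} \le F - \frac{n}{2} \quad\text{and}\quad |\nabla F|^2 = F - R - \frac{n}{2}.
\end{equation*}
The key observation is that $\nabla^2 F \ge \frac{1}{2}g$ means that along any unit-speed geodesic $\gamma(t)$ the function $t \mapsto F(\gamma(t))$ satisfies $\frac{d^2}{dt^2} F(\gamma(t)) = \nabla^2 F(\dot\gamma, \dot\gamma) \ge \frac{1}{2}$. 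Integrating this convexity estimate from the minimum point $x_0$ (where $\nabla F = 0$) along a minimal geodesic to $x$ gives the lower bound $F(x) \ge \frac{1}{4}r(x)^2 + F(x_0)$, which yields the left inequality in (2.5) after absorbing lower-order corrections into the constant $c_1$.

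For the upper bound I would integrate the gradient identity directly. Writing $h = \sqrt{F - R - n/2}$ or more simply working with $\sqrt{F}$, one estimates $\frac{d}{dt}\sqrt{F(\gamma(t))} = \frac{\langle \nabla F, \dot\gamma\rangle}{2\sqrt{F}} \le \frac{|\nabla F|}{2\sqrt{F}} \le \frac{1}{2}$, using $|\nabla F|^2 = F - R - n/2 \le F$. Integrating along a minimal geodesic from $x_0$ to $x$ of length $r(x)$ gives $\sqrt{F(x)} \le \sqrt{F(x_0)} + \frac{1}{2}r(x)$, which squares to the right inequality in (2.5).

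The main obstacle is the careful treatment near the base point and the justification that $x_0$ is genuinely the unique interior minimum rather than assuming it. The gradient identity $|\nabla F|^2 = F - R - n/2$ shows $\nabla F = 0$ forces $F = R + n/2$ there, and strict convexity guarantees uniqueness; establishing that such a minimum exists (as opposed to $F$ decreasing without bound in some direction) relies on the exhaustion property, which in turn follows from the lower bound once the base point is fixed—so some care is needed to avoid circularity. I would resolve this by first using $\nabla^2 F \ge \frac{1}{2}g$ along geodesics to show $F$ is proper and bounded below, hence attains a minimum, and only then identify $r(x)$ with distance to that minimizer. The diffeomorphism $M^n \cong \mathbb{R}^n$ then follows from $F$ being a proper strictly convex Morse function with a single critical point via standard Morse theory.
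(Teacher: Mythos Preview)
The paper does not actually prove this lemma; it merely states it as a ``well-known fact'' with references to Lemma~5.5 in \cite{Cao et al} and Lemma~2.2 in \cite{ChenDer}. Your proposal reconstructs precisely the standard argument found in those references: strict convexity from $\nabla^2 F\ge\frac12 g$, properness of $F$ via the second-variation estimate along geodesics, the lower bound by twice integrating $\frac{d^2}{dt^2}F(\gamma(t))\ge\frac12$ from the critical point $x_0$, and the upper bound by integrating $|\nabla\sqrt{F}|\le\frac12$ (which follows from $|\nabla F|^2=F-R-\frac n2\le F$). This is correct, and your handling of the potential circularity is also right: from $\nabla^2 F\ge\frac12 g$ alone one gets, along any minimal geodesic from an arbitrary reference point $p$, that $F(x)\ge F(p)-r(x)|\nabla F(p)|+\frac14 r(x)^2$, which establishes properness independently of the existence of the minimum; only then does one take $x_0$ to be the minimizer and set $r(x)=d(x,x_0)$.

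One minor remark: your lower bound $F(x)\ge\frac14 r(x)^2+F(x_0)$ is in fact sharper than the stated $\frac14(r(x)-c_1)^2$, so there is nothing to ``absorb into $c_1$''; the constant $c_1$ in the statement is there only to accommodate formulations where the base point is not exactly the minimizer, or to cover the region near $x_0$ uniformly.
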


Another useful fact is the boundedness of the scalar curvature of a gradient expanding soliton with nonnegative Ricci curvature (see, e.g.,  Ma-Chen \cite{MC}).

\begin{lemma} 
	Let $(M^n, g, f)$ be a complete noncompact gradient expanding Ricci soliton with nonnegative Ricci curvature $Rc \ge 0$. Then its scalar curvature $R$ is bounded from above, i.e., $R\le R_0$ for some positive constant $R_0$. 
	Moreover, $R>0$ everywhere unless  $(M^n, g, f)$ is the Gaussian expanding soliton.
\end{lemma}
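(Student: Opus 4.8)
The plan is to combine two ingredients available from the preliminaries: the elliptic identity satisfied by the scalar curvature, and the sign information coming from $Rc\ge 0$. Starting from Lemma 2.1 and (2.3)--(2.4), the weighted Laplacian of $R$ works out to
\[\Delta_f R=-R-2|Rc|^2,\]
the expander analogue of the shrinker identity $\Delta_f R=R-2|Rc|^2$ recalled in the introduction (it follows from the contracted second Bianchi identity together with $\nabla R=-2Rc(\nabla F,\cdot)$). Since $Rc\ge 0$ we immediately get $R=\tr Rc\ge 0$, and because the eigenvalues of $Rc$ are nonnegative we also have the pointwise bound $|Rc|^2\le R^2$. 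These are the only structural inputs I would use.

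For the upper bound I would argue by monotonicity of $R$ along the gradient flow of $F$. From $\nabla R=-2Rc(\nabla F,\cdot)$ and $Rc\ge 0$,
\[\langle\nabla R,\nabla F\rangle=-2Rc(\nabla F,\nabla F)\le 0,\]
so $R$ is non-increasing along every integral curve $\gamma$ of $\nabla F$. By Lemma 2.2, $F$ is a strictly convex ($\nabla^2F\ge\tfrac12 g$) exhaustion function whose only critical point is its minimum $x_0$; hence the backward flow of $\nabla F$ through any point $x$ remains in the compact sublevel set $\{F\le F(x)\}$ and converges to $x_0$. Since $R(\gamma(t))$ is non-decreasing as $t\to-\infty$ and $\gamma(t)\to x_0$, continuity gives $R(x)\le\lim_{t\to-\infty}R(\gamma(t))=R(x_0)$. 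Thus $R\le R_0:=R(x_0)<\infty$ on all of $M$.

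For the dichotomy I would invoke the strong maximum principle for the zeroth-order-free elliptic operator $\Delta_f$. The identity above gives $\Delta_f R=-R-2|Rc|^2\le 0$, so $R$ is $f$-superharmonic and nonnegative. If $R(p)=0$ at some interior point $p$, then $p$ is a global minimum of $R$, and the strong minimum principle forces $R\equiv 0$ on the connected manifold $M$. Then $|Rc|^2\le R^2=0$ gives $Rc\equiv 0$, so the soliton equation reduces to $\nabla^2 f=-\tfrac12 g$; a complete manifold carrying a function whose Hessian is a nonzero constant multiple of the metric is isometric to flat $\RR^n$ (Tashiro's theorem on concircular fields), i.e. $(M^n,g,f)$ is the Gaussian expanding soliton. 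Conversely, if the soliton is not Gaussian, then $R$ has no interior zero, so $R>0$ everywhere and in particular $R_0=R(x_0)>0$.

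The steps are mostly soft once the identity $\Delta_f R=-R-2|Rc|^2$ is in hand. The point needing the most care is the convergence of the backward $\nabla F$-flow to the unique minimum $x_0$, so that the monotone limit of $R$ along $\gamma$ is exactly $R(x_0)$; this uses that $F$ is a proper strictly convex function with a single critical point (Lemma 2.2) together with a standard gradient-flow Lyapunov argument to identify the $\alpha$-limit set. The other external input is the rigidity statement that $\nabla^2 f=-\tfrac12 g$ on a complete manifold forces flatness, which I expect to be the main place one leans on a cited classification result rather than on a direct computation.
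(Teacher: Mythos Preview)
The paper does not actually prove this lemma; it is quoted as a known fact with a reference to Ma--Chen \cite{MC}, so there is no ``paper's own proof'' to compare against. Your argument is nonetheless correct and self-contained within the preliminaries already set up in Section~2.

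Your route for the upper bound---showing $\langle\nabla R,\nabla F\rangle=-2Rc(\nabla F,\nabla F)\le 0$ and then running the backward gradient flow of the proper strictly convex exhaustion $F$ (Lemma~2.2) down to its unique critical point $x_0$---is a clean way to get $R\le R(x_0)$. The Lyapunov argument you sketch (the backward trajectory is trapped in the compact sublevel $\{F\le F(x)\}$, the $\alpha$-limit set lies in $\{\nabla F=0\}=\{x_0\}$) is standard and works exactly because $\nabla^2F\ge\tfrac12 g$ guarantees a unique critical point. Your treatment of the dichotomy via the strong minimum principle for $\Delta_f$ applied to $R\ge 0$ with $\Delta_f R=-R-2|Rc|^2\le 0$ (Lemma~2.4), followed by $R\equiv 0\Rightarrow Rc\equiv 0\Rightarrow\nabla^2 f=-\tfrac12 g$ and Tashiro's rigidity, is the expected argument and is correct. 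The only places you lean on outside input are exactly the two you flag: convergence of the backward flow (justified as above) and the Hessian rigidity for $\nabla^2 f=cg$ on a complete manifold.
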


Note that, under the assumption of $Rc\ge 0$ (or even $Rc\ge (\epsilon-\frac 12)g$ for some constant $\epsilon>0$), the potential function $F(x)$ defined by (2.1) grows quadratically hence is proportional to $r^2(x)$, the square of the distance function, from above and below at large distance. In the rest of the paper, we denote by
\begin{eqnarray*}
	D(r)&=& \{x\in M : F(x) \leq r \}.
\end{eqnarray*}
By the Bishop volume comparison, we know that the volume $V(r)$ of $D(r)$ satisfies
$$ V(r) \leq cr^{\frac{n}{2}}. \eqno (2.6) $$

We now collect several well-known differential identities on the curvatures $R, Rc$ and $Rm$ that we shall use later. 

\begin{lemma} \label{diffequation}
	Let $(M^n, g, f)$ be a complete gradient expanding Ricci soliton satisfying Eq. (1.1). Then, we have
	\begin{eqnarray*}
		\Delta_{f} R &=&-R-2|Rc|^2,\\
		\Delta_{f} R_{ik} &=&-R_{ik} -2R_{ijkl}R_{jl},\\
		\Delta_{f} {Rm} &=&  -Rm+ Rm\ast Rm,\\
		\na_lR_{ijkl} &=& \na_jR_{ik}-\na_i R_{jk}=-R_{ijkl}\na_lF, 
	\end{eqnarray*}
	where,  on the RHS of the third equation, $Rm\ast Rm$ denotes the sum of a finite number of terms involving quadratics in $Rm$.
\end{lemma}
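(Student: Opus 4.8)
The four formulas are the classical Hamilton-type gradient-soliton identities, and my plan is to derive them in the order in which each feeds the next, starting with the Bianchi line (the fourth) since it underlies the others. I work with the soliton equation in the form $\nabla_i\nabla_k F=R_{ik}+\tfrac12 g_{ik}$ from (2.2), matching the general relation $Rc+\nabla^2 f=\lambda g$ with $\lambda=-\tfrac12$; the asserted linear coefficients $-R$, $-R_{ik}$, $-Rm$ are then exactly the $2\lambda$-terms. In the fourth line, the first equality $\nabla_l R_{ijkl}=\nabla_j R_{ik}-\nabla_i R_{jk}$ is the once-contracted second Bianchi identity, valid on any Riemannian manifold. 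The second equality is soliton-specific: differentiating (2.2) gives $\nabla_j\nabla_i\nabla_k F=\nabla_j R_{ik}$ and $\nabla_i\nabla_j\nabla_k F=\nabla_i R_{jk}$, and subtracting, the commutator of third derivatives of $F$ is evaluated by the Ricci identity applied to the one-form $\nabla F$, giving a single Riemann-$\nabla F$ contraction which, in the paper's curvature convention, is $-R_{ijkl}\nabla_l F$. Combining the two equalities also records the useful byproduct $\nabla_l R_{ijkl}=-R_{ijkl}\nabla_l F$.

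For the scalar identity I would take the divergence of Hamilton's gradient relation $\nabla_i R=2R_{ij}\nabla_j f$ (Lemma 2.1). Expanding $\Delta R=\nabla_i(2R_{ij}\nabla_j f)$ and using the contracted Bianchi identity $\nabla_i R_{ij}=\tfrac12\nabla_j R$ produces the transport term $\nabla f\cdot\nabla R$, while the remaining piece $2R_{ij}\nabla_i\nabla_j f$ is evaluated from the soliton equation $\nabla_i\nabla_j f=-\tfrac12 g_{ij}-R_{ij}$ to give $-R-2|Rc|^2$. Absorbing the transport term converts $\Delta$ into $\Delta_f=\Delta-\nabla f\cdot\nabla$ and yields $\Delta_f R=-R-2|Rc|^2$.

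For the Ricci identity I would take the divergence $\nabla_j$ of the soliton Bianchi relation $\nabla_j R_{ik}-\nabla_i R_{jk}=-R_{ijkl}\nabla_l F$. The leading term is $\Delta R_{ik}$; the term $\nabla_j\nabla_i R_{jk}$ is handled by commuting the two derivatives, producing $\tfrac12\nabla_i\nabla_k R$ together with curvature-times-Ricci commutator terms; and on the right one substitutes the Hessian $\nabla_j\nabla_l F=R_{jl}+\tfrac12 g_{jl}$ and the byproduct $\nabla_l R_{ijkl}=-R_{ijkl}\nabla_l F$, whose $\nabla F$-contractions reassemble into the transport term $\nabla f\cdot\nabla R_{ik}$. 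Collecting, the $\tfrac12 g$ factor of the Hessian supplies the linear term $-R_{ik}$ and the curvature contractions combine to $-2R_{ijkl}R_{jl}$, giving $\Delta_f R_{ik}=-R_{ik}-2R_{ijkl}R_{jl}$. The full Riemann identity follows by the same scheme applied to the second Bianchi identity for $R_{ijkl}$: commuting two covariant derivatives past that identity, inserting the soliton Hessian $\nabla^2 F=Rc+\tfrac12 g$, and collecting. The $\tfrac12 g$ factor again produces the linear $-Rm$ term, while all Ricci-identity commutators and $\nabla F$-contractions assemble into the transport term $\nabla f\cdot\nabla Rm$ and a sum of quadratic curvature contractions, written schematically as $Rm\ast Rm$ since only the structure, not the coefficients, is asserted.

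The routine content is the index algebra; the one place demanding care is the Ricci (hence Riemann) computation, where the commutator terms generated by the Ricci identity must combine precisely with the $\nabla F$-contractions to reconstruct the weighted Laplacian $\Delta_f$ and to leave exactly the quadratic curvature term with coefficient $-2$. Because these are the standard gradient-soliton identities, the expanding case differs from the shrinking and steady cases only through the value $\lambda=-\tfrac12$; accordingly I would either carry out the computation directly or simply note that each formula is the $\lambda=-\tfrac12$ specialization of the general gradient Ricci soliton identities of Hamilton.
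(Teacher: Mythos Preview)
Your proof sketch is correct and follows the standard derivation of the Hamilton-type soliton identities. Note, however, that the paper does not actually prove this lemma: it is stated as a collection of well-known differential identities without proof, so there is no ``paper's own proof'' to compare against. Your approach---starting from the Bianchi-soliton line via the Ricci identity applied to $\nabla F$, then building up from $R$ to $Rc$ to $Rm$ by divergence and commutation---is exactly the classical route and would serve perfectly well as the omitted proof.
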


Based on Lemma 2.4,  one can easily derive the following differential inequalities  (see also \cite{MW15, CaoCui} for the shrinking and steady ones):

\begin{lemma} \label{diffeqofnorm}
	Let $(M^n, g, f)$ be a complete gradient expanding Ricci soliton satisfying Eq. (1.1). Then
	\begin{eqnarray*}
		\Delta_{f} |Rc|^2 & \ge & 2|\na Rc|^2-2|Rc|^2-4|Rm| |Rc|^2, \\
		\Delta_{f}|Rm|^2  &\ge & 2|\na Rm|^2 - 2|Rm|^2-c|Rm|^3,\\
		\Delta_{f} |Rm| &\ge & -|Rm|-c|Rm|^2.
	\end{eqnarray*}
	Here $c>0$ is some universal constant depending only on the dimension $n$.
\end{lemma}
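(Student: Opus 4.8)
All three inequalities will come from one source: the Bochner-type identity
\[
\Delta_f |T|^2 \;=\; 2\langle \Delta_f T,\, T\rangle \;+\; 2|\nabla T|^2,
\]
valid for any tensor field $T$, where $\Delta_f=\Delta-\nabla_{\nabla f}$ and $\Delta$ is the connection Laplacian (the same operator appearing in Lemma~\ref{diffequation}). The plan is to first record this identity, which follows from the ordinary tensorial Bochner formula $\Delta|T|^2=2\langle\Delta T,T\rangle+2|\nabla T|^2$ together with $\langle\nabla f,\nabla|T|^2\rangle=2\langle\nabla_{\nabla f}T,T\rangle$, the $\nabla_{\nabla f}$-term combining with $\langle\Delta T,T\rangle$ to form $\langle\Delta_f T,T\rangle$.

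Next I would substitute the evolution equations of Lemma~\ref{diffequation}. For the first inequality, take $T=Rc$: contracting $\Delta_f R_{ik}=-R_{ik}-2R_{ijkl}R_{jl}$ against $R_{ik}$ gives $\langle\Delta_f Rc,Rc\rangle=-|Rc|^2-2R_{ijkl}R_{jl}R_{ik}$, hence
\[
\Delta_f|Rc|^2 \;=\; 2|\nabla Rc|^2-2|Rc|^2-4R_{ijkl}R_{jl}R_{ik}.
\]
The cubic term is controlled by Cauchy--Schwarz: setting $S_{ik}=\sum_{j,l}R_{ijkl}R_{jl}$ one has $|S|\le|Rm|\,|Rc|$ (Cauchy--Schwarz in the indices $(j,l)$, then summing over $(i,k)$), so $|R_{ijkl}R_{jl}R_{ik}|=|\langle S,Rc\rangle|\le|Rm|\,|Rc|^2$, which yields the first inequality. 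The second is the identical computation with $T=Rm$: from $\Delta_f Rm=-Rm+Rm\ast Rm$,
\[
\Delta_f|Rm|^2 \;=\; 2|\nabla Rm|^2-2|Rm|^2+2\langle Rm\ast Rm,\,Rm\rangle,
\]
and since $Rm\ast Rm$ is a fixed finite sum of quadratic contractions of $Rm$, one has $|\langle Rm\ast Rm,Rm\rangle|\le c|Rm|^3$ for a dimensional constant $c=c(n)$.

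Finally, for the third inequality I would pass from $|Rm|^2$ to $|Rm|$ via Kato's inequality $|\nabla|Rm||\le|\nabla Rm|$. On the open set $\{|Rm|>0\}$, put $u=|Rm|$; then $\Delta_f u=\tfrac{1}{2u}\Delta_f u^2-\tfrac{|\nabla u|^2}{u}$, and substituting the second inequality together with $|\nabla u|^2=|\nabla|Rm||^2\le|\nabla Rm|^2$ lets one discard the gradient terms and leaves $\Delta_f u\ge -u-\tfrac{c}{2}u^2\ge -u-cu^2$. At a point where $|Rm|=0$ the desired inequality holds trivially in the barrier sense, since $|Rm|\ge 0$ makes the constant $0$ a smooth lower support touching $|Rm|$ there with $\Delta_f 0=0\ge -|Rm|-c|Rm|^2$; alternatively one runs the computation with $u_\varepsilon=\sqrt{|Rm|^2+\varepsilon}$, gets $\Delta_f u_\varepsilon\ge -u_\varepsilon-c|Rm|^2$ everywhere, and lets $\varepsilon\to 0$. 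I do not expect any genuine obstacle here: this is a routine Bochner-and-Kato computation, and the only point calling for a little care is the behavior on the zero set of $|Rm|$, which the barrier (or $\varepsilon$-regularization) remark disposes of.
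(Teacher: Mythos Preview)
Your proof is correct and is exactly the approach the paper has in mind: the paper does not spell out a proof but simply says the inequalities are ``easily derived'' from Lemma~\ref{diffequation} (and cites \cite{MW15, CaoCui} for the analogous shrinking and steady computations), which is precisely your Bochner-plus-Kato argument. There is nothing to add.
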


Also,  from \cite{CaoLiu21} we have the following differential inequalities on the covariant derivative $\na Rm$ of the curvature tensor (see \cite{MW15} for the shrinking case). 

\begin{lemma} Let $(M^n, g, f)$ be a complete gradient expanding Ricci soliton satisfying Eq. (1.1). Then
	\begin{eqnarray*}
		\Delta_{f} |\na Rm|^2 &\ge & 2|\na^2 Rm|^2  -3|\na Rm|^2-c|Rm| |\na Rm|^2 \quad \mbox{and} \\
		\Delta_{f} |\na Rm| &\ge & -\frac 3 2|\na Rm|-c|Rm||\na Rm|.
	\end{eqnarray*}
\end{lemma}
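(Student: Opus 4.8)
The plan is to derive both inequalities from a single Bochner-type identity for the tensor $\na Rm$, together with Kato's inequality for the second one. The starting point is the weighted Bochner formula: for any tensor field $T$,
\[
\Delta_{f} |T|^2 = 2\langle \Delta_{f} T, T\rangle + 2|\na T|^2 .
\]
Applying this with $T=\na Rm$ reduces the first inequality to establishing the pointwise bound
\[
\langle \Delta_{f} \na Rm, \na Rm\rangle \ge -\tfrac32 |\na Rm|^2 - c\,|Rm|\,|\na Rm|^2 ,
\]
since then $\Delta_{f}|\na Rm|^2 = 2\langle \Delta_{f}\na Rm, \na Rm\rangle + 2|\na^2 Rm|^2$ immediately yields the claimed lower bound. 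Thus the heart of the matter is to compute $\Delta_{f}\na Rm$, or at least its contraction with $\na Rm$.

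To compute $\Delta_{f}\na Rm$, I would first differentiate the evolution equation $\Delta_{f} Rm=-Rm+Rm\ast Rm$ from Lemma \ref{diffequation}, obtaining $\na(\Delta_{f} Rm)=-\na Rm+Rm\ast\na Rm$, and then commute $\na$ past $\Delta_{f}$. The commutator splits into two contributions. The first comes from the rough Laplacian: the standard identity $\Delta\na_p T-\na_p\Delta T = Rm\ast\na T + (\div Rm)\ast T$ produces, when $T=Rm$, only terms of the schematic form $Rm\ast\na Rm$ (note $|\div Rm|\le|\na Rm|$). The second, soliton-specific contribution comes from commuting $\na$ with $\na f\cdot\na$: writing the Hessian via the soliton equation $\na^2 f=-Rc-\tfrac12 g$ from (\ref{expandingeq}), the trace term $(\na^2 f)_{pj}\na_j T=-Rc_{pj}\na_j T-\tfrac12\na_p T$ contributes the clean linear term $-\tfrac12\na Rm$ together with a $Rc\ast\na Rm$ term bounded by $c|Rm|\,|\na Rm|$. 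Collecting everything gives
\[
\Delta_{f}\na Rm = -\tfrac32\,\na Rm + Rm\ast\na Rm ,
\]
where $Rm\ast\na Rm$ denotes a finite sum of contractions each bounded by $c|Rm|\,|\na Rm|$; the coefficient $-\tfrac32=-1-\tfrac12$ combines the $-Rm$ term in the soliton equation and the $-\tfrac12 g$ trace term. Pairing with $\na Rm$ and applying Cauchy--Schwarz to the quadratic terms gives the bound above, hence the first inequality.

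I expect the main obstacle to be the careful bookkeeping of the commutator, and in particular controlling the term $\na_j f\,R_{pjk\ell}\ast T$ that arises from commuting $\na_p$ past $\na f\cdot\na$. At first sight this term is dangerous, since $|\na f|$ grows linearly in the distance and is therefore unbounded. The key observation is that on a gradient expanding soliton the last identity of Lemma \ref{diffequation}, $\na_\ell R_{ijk\ell}=-R_{ijk\ell}\na_\ell F$, together with the symmetries of $Rm$, lets one rewrite $Rm(\cdot,\cdot,\cdot,\na F)$ as a divergence $-\div Rm$ of the curvature tensor; since $\na F=-\na f$, the unbounded factor $\na f$ is thereby traded for one extra covariant derivative of $Rm$, turning the apparently dangerous term into one more contribution of type $\na Rm\ast Rm$, bounded by $c|Rm|\,|\na Rm|$. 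This is the soliton structure that makes the coefficients close up, and it is the step requiring the most care.

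Finally, the second inequality follows from the first by Kato's inequality. On the open set where $|\na Rm|>0$, writing $u=|\na Rm|$ and using $\Delta_{f} u^2=2u\,\Delta_{f} u+2|\na u|^2$ gives
\[
\Delta_{f} |\na Rm| = \frac{\Delta_{f}|\na Rm|^2 - 2\,\bigl|\na|\na Rm|\bigr|^2}{2\,|\na Rm|} .
\]
Since Kato's inequality gives $\bigl|\na|\na Rm|\bigr|^2\le|\na^2 Rm|^2$, the two Hessian terms $2|\na^2 Rm|^2$ and $-2\bigl|\na|\na Rm|\bigr|^2$ combine to a nonnegative quantity, and substituting the first inequality yields $\Delta_{f}|\na Rm|\ge -\tfrac32|\na Rm|-c|Rm|\,|\na Rm|$, as claimed. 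Where $|\na Rm|=0$ the inequality is understood in the usual barrier (distributional) sense.
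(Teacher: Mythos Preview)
The paper does not prove this lemma in the text; it simply quotes it from \cite{CaoLiu21} (see the sentence preceding Lemma~2.6). Your argument is correct and is exactly the standard derivation one finds in that reference and in \cite{MW15} for the shrinking case: compute $\Delta_f(\na Rm)$ by differentiating $\Delta_f Rm=-Rm+Rm\ast Rm$ and commuting, pick up the extra $-\tfrac12\na Rm$ from the Hessian of $f$ via the soliton equation, use the identity $R_{ijkl}\na_l F=-\na_l R_{ijkl}$ to convert the $\na f$--curvature commutator term into $Rm\ast\na Rm$, and then apply the Bochner identity and Kato's inequality.

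One minor imprecision: in the rough-Laplacian commutator you write $[\Delta,\na_p]T=Rm\ast\na T+(\div Rm)\ast T$, but the correct schematic form has $\na Rm\ast T$ rather than merely $(\div Rm)\ast T$. This makes no difference here, since with $T=Rm$ the term is of type $Rm\ast\na Rm$ either way and is absorbed into the same $c|Rm|\,|\na Rm|^2$ bound.
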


In \cite{CarNi},  Carrillo and Ni proved the following non-collapsing result for gradient expanding soliton with nonnegative Ricci curvature.
 
\begin{lemma}  {\bf (Carrillo-Ni \cite{CarNi})} \label{non-collapsing} Let $(M^n, g, f)$ be a complete gradient expanding Ricci soliton with nonnegative Ricci curvature. Then there exists a constant $\kappa>0$ such that if $|Rc| \leq 1$ on a unit geodesic ball $B(x_0, 1)$ centered at $x_0$, then 
	\[V (x_0, 1)\geq \kappa,\] where $V (x_0, 1)$ denotes the volume of $B(x_0, 1)$.  
\end{lemma}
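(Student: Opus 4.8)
The plan is to deduce the volume lower bound from a logarithmic Sobolev inequality on $(M^n,g)$, in the spirit of Perelman's $\kappa$-non-collapsing and following the strategy of Carrillo--Ni. The soliton structure will supply a \emph{uniform} (center-independent) log-Sobolev constant, and a standard test-function argument localized to $B(x_0,1)$ will then convert this inequality into the non-collapsing bound. The reason one cannot simply quote $Rc\ge 0$ and Bishop--Gromov is that nonnegative Ricci alone does not prevent unit-scale collapsing (e.g. thin flat cylinders), so the self-similar structure of the soliton is genuinely needed.

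\emph{Step 1 (expander entropy and log-Sobolev).} I would view the expanding soliton as a self-similar solution of the Ricci flow and introduce the expander entropy functional (up to normalization of constants)
$$\mathcal W_+(g,f,\sigma) = \int_M \big[\sigma(|\nabla f|^2 + R) - f + n\big]\,(4\pi\sigma)^{-n/2} e^{-f}\,dV.$$
Because the soliton is self-similar, $\mathcal W_+$ is constant along the flow, and one evaluates it using the normalization $R+|\nabla f|^2=-f$ from Lemma 2.1 together with Eq.\ (2.2)--(2.4). Completing the square then yields a Perelman-type logarithmic Sobolev inequality at unit scale: there is a constant $\mu_0>-\infty$, depending only on $(M,g,f)$, such that for every $u\in C_c^\infty(M)$ with $\int_M u^2\,dV=1$,
$$\int_M \big(4|\nabla u|^2 + R\,u^2\big)\,dV - \int_M u^2\log u^2\,dV \;\ge\; \mu_0. \eqno(\ast)$$
The finiteness, in particular the lower boundedness, of $\mu_0$ is exactly where $Rc\ge 0$ enters: by Lemma \ref{potencialfunction} the function $F=-f+n/2$ grows quadratically, so $e^{-f}$ decays like a Gaussian and all the integrals defining $\mathcal W_+$ converge, while Lemma 2.3 gives $0\le R\le R_0$, so the curvature term in $(\ast)$ has a favorable sign.

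\emph{Step 2 (localization).} Fix a center $x_0$ with $|Rc|\le 1$ on $B(x_0,1)$; since $Rc\ge 0$, the eigenvalues of $Rc$ lie in $[0,1]$ and hence $0\le R=\tr Rc\le n$ on this ball. I would take the test function $u$ to be a Lipschitz cutoff equal to a constant on $B(x_0,\tfrac12)$, supported in $B(x_0,1)$, and normalized so that $\int_M u^2\,dV=1$; thus $u\approx V(x_0,\tfrac12)^{-1/2}$ on the bulk of the ball. Inserting $u$ into $(\ast)$, the curvature term is bounded by $\sup_B R\le n$; the gradient term equals a multiple of $V(x_0,1)/V(x_0,\tfrac12)$, which is $\le 2^n$ by the Bishop--Gromov volume-ratio comparison (valid since $Rc\ge 0$); and the entropy term satisfies $-\int_M u^2\log u^2\,dV \approx \log V(x_0,\tfrac12)$. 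Rearranging $(\ast)$ gives $\log V(x_0,\tfrac12)\ge \mu_0 - C(n)$, and a further application of volume comparison yields $\log V(x_0,1)\ge -C'$ for a constant $C'=C'(n,\mu_0)$ independent of $x_0$; that is, $V(x_0,1)\ge \kappa:=e^{-C'}>0$. Concretely, if $V(x_0,1)$ were too small, the test function would force the left-hand side of $(\ast)$ to $-\infty$, contradicting the inequality.

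\emph{Main obstacle.} The crux is Step 1: justifying $(\ast)$ with a finite, $x_0$-independent constant on the \emph{noncompact} manifold $M$, which is diffeomorphic to $\mathbb R^n$ by Lemma \ref{potencialfunction}. This demands controlling every integral at infinity and checking that the entropy infimum $\inf\mathcal W_+(\cdot,1)$ is $>-\infty$ (and, for the sharp statement, that it is realized by the soliton potential itself). Here the quadratic growth of $F$ and $Rc\ge 0$ (hence $0\le R\le R_0$) are indispensable, both for the integrability of $[\sigma(|\nabla f|^2+R)-f+n]\,e^{-f}$ and for ruling out concentration of minimizing sequences. Once $(\ast)$ is established with a uniform constant, Step 2 is the routine Perelman-type argument, and the uniformity of $\kappa$ over all admissible centers $x_0$ is automatic because $\mu_0$ is a single global invariant of $(M,g,f)$.
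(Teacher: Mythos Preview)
Your proposal is correct and follows precisely the approach the paper indicates: the paper does not give a proof of this lemma but only cites Carrillo--Ni and notes in Remark~2.1 that the result follows from their logarithmic Sobolev inequality for expanding solitons with nonnegative Ricci curvature (Theorem~5.2 in \cite{CarNi}), together with the standard Perelman-type localization argument. You have supplied exactly those details, so there is nothing to add.
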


\begin{remark} In \cite{CarNi}, the authors only stated the non-collapsing result for shrinking Ricci solitons (see Corollary 4.2 in \cite{CarNi}). But Lemma 2.7 holds similarly because of their logarithmic Sobolev inequality for expanding solitons with nonnegative Ricci curvature (see Theorem 5.2 in \cite{CarNi}). 
\end{remark}

Concerning the volume growth, Hamilton \cite {Ha05} obtained the following result  (see also Proposition 9.46 in \cite{CLN}). 

\begin{lemma}  {\bf (Hamilton \cite{Ha05})} \label{avr} Let $(M^n, g, f)$ be any $n$-dimensional complete noncompact gradient expanding Ricci soliton with nonnegative Ricci curvature. Then it must have positive {\it asymptotic volume ratio}. Namely, for any base point $x_0\in M^n$, 
$$ \nu_M:= \lim_{r\to \infty} \frac {V (x_0, r)}{r^n} >0, \eqno (2.7) $$
where $V(x_0, r)$ denotes the volume of the geodesic ball $B(x_0, r)$. 
 \end{lemma}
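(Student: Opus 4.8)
The statement has two parts: the \emph{existence} of the limit $\nu_M$ and its \emph{positivity}. For existence I would simply invoke the Bishop--Gromov volume comparison theorem: since $Rc\ge 0$, the ratio $\rho\mapsto V(x_0,\rho)/\rho^n$ is monotone nonincreasing, so $\nu_M=\lim_{\rho\to\infty}V(x_0,\rho)/\rho^n$ exists and lies in $[0,\omega_n]$, where $\omega_n$ is the volume of the unit Euclidean ball. The entire content is therefore to rule out $\nu_M=0$, i.e.\ to produce a \emph{lower} volume bound $V(x_0,\rho)\ge\nu\,\rho^n$ along a sequence $\rho\to\infty$ with $\nu>0$.

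For the positivity I would exploit the gradient flow of the potential $F=-f+n/2$. By (2.2) and $Rc\ge0$ one has the crucial convexity $\na^2F=Rc+\tfrac12 g\ge\tfrac12 g$, and by Lemma \ref{potencialfunction} $F$ is a strictly convex exhaustion function with $\tfrac14(r-c_1)^2\le F\le\tfrac14(r+c)^2$. Let $\Phi_s$ denote the flow of $\na F$ (which moves points outward, away from the minimum $x_0$). Differentiating a variation field $J(s)=d\Phi_s(v)$ along a flow line gives $\tfrac{d}{ds}|J|^2=2\,\na^2F(J,J)\ge|J|^2$, hence $|d\Phi_s(v)|\ge e^{s/2}|v|$ for every tangent vector $v$. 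Consequently the smallest singular value of $d\Phi_s$ is at least $e^{s/2}$, its Jacobian is at least $e^{ns/2}$, and $\Phi_s$ expands the volume of any region by a factor $\ge e^{ns/2}$.

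To convert this into the desired estimate I would track how far $\Phi_s$ pushes a fixed sublevel set $D(r_0)$. Along a flow line $\tfrac{d}{ds}F=|\na F|^2=F-R-\tfrac n2\le F$ by (2.3) and $R\ge0$, so Gronwall gives $F(\Phi_s(x))\le e^sF(x)$; thus $\Phi_s(D(r_0))\subseteq D(e^sr_0)$, which by Lemma \ref{potencialfunction} is contained in a ball $B(x_0,\rho_s)$ with $\rho_s\le 2\sqrt{r_0}\,e^{s/2}+c_1$. Combining with the volume expansion yields $V(x_0,\rho_s)\ge \Vol(\Phi_s(D(r_0)))\ge e^{ns/2}V(r_0)$, and since $\rho_s^n\sim 2^nr_0^{n/2}e^{ns/2}$ the two powers of $e^{ns/2}$ cancel, giving $V(x_0,\rho_s)/\rho_s^n\ge V(r_0)/(2^nr_0^{n/2})+o(1)$. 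Letting $s\to\infty$ forces $\rho_s\to\infty$, and the monotonicity from Bishop--Gromov then shows $\nu_M\ge V(r_0)/(2^nr_0^{n/2})>0$.

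The main obstacle, and the place requiring genuine care, is the analysis of the flow $\Phi_s$: I must check it is defined for all $s\ge0$ (completeness), which follows because $|\na F|\sim\tfrac12 r$ grows at most linearly, so the distance along flow lines cannot escape in finite time; and I must make the two exponential rates match exactly, the volume being expanded by $e^{ns/2}$ while the radius reached grows only like $e^{s/2}$, so that their ratio stabilizes rather than degenerating. Everything else (existence of the limit, the Hessian/Jacobian estimate, and the Gronwall comparison) is routine once this flow picture is set up.
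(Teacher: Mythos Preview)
The paper does not prove this lemma; it is quoted as a known result of Hamilton (with a pointer to Proposition~9.46 in \cite{CLN}) and used as a black box. So there is no ``paper's own proof'' to compare against.

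Your argument is correct and is in fact the standard one. The two ingredients you isolate are exactly the right ones: the Hessian lower bound $\na^2F\ge\tfrac12 g$ forces every variation field along the gradient flow of $F$ to satisfy $\tfrac{d}{ds}|J|^2\ge|J|^2$, hence the Jacobian of $\Phi_s$ is $\ge e^{ns/2}$ and volumes expand at least by that factor; meanwhile $\tfrac{d}{ds}F=|\na F|^2\le F$ keeps $\Phi_s(D(r_0))\subset D(e^s r_0)$, which by Lemma~\ref{potencialfunction} sits inside a geodesic ball of radius $\asymp e^{s/2}$. The two exponential rates match exactly, giving a uniform positive lower bound for $V(x_0,\rho_s)/\rho_s^n$ along $\rho_s\to\infty$, and Bishop--Gromov monotonicity finishes. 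Completeness of the flow follows, as you note, from $|\na F|\le\sqrt{F}\le\tfrac12(r+c)$.

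One small point worth making explicit for the reader: the variation formula $\tfrac{d}{ds}|J|^2=2\,\na^2F(J,J)$ uses that $J=d\Phi_s(v)$ commutes with the generator $\na F$ (i.e.\ $[\na F,J]=0$), so that $\tfrac{D}{ds}J=\na_J\na F$. Otherwise the argument is clean and self-contained.
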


Finally, we shall need the following well-known result about Sobolev inequality on manifolds with nonnegative Ricci curvature and positive asymptotic volume ratio; 
 see Yau \cite{Yau82}, and the very recent work of Brendle \cite{Brendle} for a sharp version.  

\begin{lemma}  \label{sobolev}
 Let $(M^n, g)$ be an $n$-dimensional complete manifold with nonnegative Ricci curvature $Rc\geq 0$ and positive asymptotic volume ratio  $\nu_M>0$. Then there exists a constant $C_s>0$ such that, for any compact domain $\Omega \subset M$
and any positive smooth function $\varphi$ with compact support in $\Omega$, 
\[ C_s \left(\int_{\Omega} \varphi^{\frac n {n-1}}\right)^{\frac {n-1}n} \leq \int_{\Omega} |\na \varphi|.\]
\end{lemma}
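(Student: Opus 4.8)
The plan is to prove the $L^1$-Sobolev inequality by first establishing the equivalent \emph{isoperimetric} inequality and then passing to general functions via the co-area formula. The equivalence of the two statements is classical: the $L^1$-Sobolev inequality $C_s(\int_\Omega \varphi^{n/(n-1)})^{(n-1)/n}\le \int_\Omega|\na\varphi|$ for all nonnegative $\varphi\in C^\infty_c(\Omega)$ holds with a given constant $C_s$ if and only if the isoperimetric inequality $C_s\,\Vol(E)^{(n-1)/n}\le \operatorname{Area}(\pa E)$ holds (with the same $C_s$) for all smooth bounded open sets $E\Subset M$. So it suffices to establish the isoperimetric inequality on $(M^n,g)$.

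To get the isoperimetric inequality, I would invoke the hypotheses $Rc\ge 0$ and $\nu_M>0$ directly. Under nonnegative Ricci curvature, the Bishop--Gromov comparison theorem gives that the volume ratio $V(x,r)/r^n$ is monotone nonincreasing, so the asymptotic volume ratio $\nu_M=\lim_{r\to\infty}V(x,r)/r^n$ is a lower bound for every finite-radius ratio: $V(x,r)\ge \nu_M\, r^n$ for all $x\in M$ and all $r>0$. This uniform (in basepoint) volume lower bound is precisely the input needed for a non-collapsed isoperimetric estimate. The cleanest route is to cite the work already flagged in the statement: Yau's Sobolev inequality (or its sharp isoperimetric refinement due to Brendle, \cite{Brendle}) shows that a complete manifold with $Rc\ge 0$ and $\nu_M>0$ admits a uniform isoperimetric constant $C_s=C_s(n,\nu_M)>0$. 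Thus the isoperimetric inequality holds globally on $M$ with a single constant.

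Finally I would recover the Sobolev inequality for an arbitrary nonnegative $\varphi\in C^\infty_c(\Omega)$ from the isoperimetric inequality using the co-area formula. Setting $E_t=\{x\in\Omega:\varphi(x)>t\}$ for $t>0$, the co-area formula gives $\int_\Omega|\na\varphi|=\int_0^\infty \operatorname{Area}(\pa E_t)\,dt$, and by the isoperimetric inequality each term is bounded below by $C_s\,\Vol(E_t)^{(n-1)/n}$. On the other hand, writing $\mu(t)=\Vol(E_t)$, one has $\int_\Omega \varphi^{n/(n-1)}=\tfrac{n}{n-1}\int_0^\infty t^{1/(n-1)}\mu(t)\,dt$, and a standard one-variable argument (Minkowski's integral inequality, or equivalently the layer-cake estimate $(\int_0^\infty \mu(t)^{(n-1)/n}dt)$ dominating $(\tfrac{n}{n-1}\int_0^\infty t^{1/(n-1)}\mu(t)\,dt)^{(n-1)/n}$) converts the lower bound on $\int_0^\infty \mu(t)^{(n-1)/n}dt$ into the desired lower bound on $(\int_\Omega\varphi^{n/(n-1)})^{(n-1)/n}$. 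Combining these yields $C_s(\int_\Omega\varphi^{n/(n-1)})^{(n-1)/n}\le \int_\Omega|\na\varphi|$, as claimed.

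The main obstacle is the isoperimetric inequality itself, i.e.\ producing a \emph{uniform} constant $C_s$ valid for all domains at all scales rather than just asymptotically. The asymptotic volume ratio controls only the large-scale behavior, so one must use the Bishop--Gromov monotonicity to upgrade $\nu_M>0$ into a scale-invariant volume lower bound, and then feed this into the non-collapsed isoperimetric machinery of Yau or Brendle; verifying that the resulting constant depends only on $n$ and $\nu_M$ (and not on the domain) is the crux. The co-area reduction is routine once this uniform isoperimetric estimate is in hand.
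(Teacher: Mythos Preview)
The paper does not actually prove this lemma; it is stated as a well-known result and simply attributed to Yau \cite{Yau82} (with Brendle \cite{Brendle} cited for the sharp version). So there is no ``paper's own proof'' to compare against.

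Your outline is a correct reconstruction of the standard argument: the classical Federer--Fleming/Maz'ya equivalence between the $L^1$-Sobolev inequality and the isoperimetric inequality, the Bishop--Gromov monotonicity to upgrade $\nu_M>0$ to the uniform volume lower bound $V(x,r)\ge \nu_M r^n$ at all scales and basepoints, and the co-area/layer-cake computation to pass back from isoperimetric to Sobolev. One minor remark: the detour through the isoperimetric inequality is not strictly necessary, since Yau's argument already yields the Sobolev inequality directly (and Brendle proves the sharp isoperimetric inequality, which is stronger). But either way the logic is sound, and the key point you correctly identify --- that Bishop--Gromov turns the asymptotic condition $\nu_M>0$ into a scale-invariant, basepoint-independent non-collapsing bound --- is exactly what makes the constant $C_s$ uniform.
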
 

\medskip

\section{The Integral  Estimate}

In this section, we prove a crucial integral curvature estimate needed in the proof of Theorem 1.1. First of all, 
we note that the assumptions of $Rc \geq 0$ and the finite asymptotic scalar curvature ratio (1.4) imply that

\smallskip
\begin{enumerate}
	\item [(i)] $0\leq R \leq R_0. $

\smallskip
	\item [(ii)] $ \na_i\na_j F \geq \frac{1}{2}g_{ij}. $
\smallskip

	\item [(iii)] $ |Rc| \leq R \leq \frac{C}{F}, $ for some constant $C>0$.

\smallskip
\end{enumerate}

\noindent Next, following \cite{MW17}, we define the cut-off function $\phi$ with support in $D (r)$ by
\begin{equation} \label{cut-off}
	\phi \left( x\right) =\left\{ 
	\begin{array}{ccc}
		\frac{1}{r}\left( r-F\left( x\right) \right) & \text{if} & x\in D\left(
		r\right) \\ 
		0 & \text{if} & x\in M\backslash D\left( r\right)%
	\end{array}%
	\right.
\end{equation}
so that
\[ \na \phi=-\na F/r \quad \mbox{and} \quad \Delta \phi =-\Delta F/ r \quad   \mbox{on} \ D(r). \]

Also, for any large number $p>0$ to be chosen later, we let $q=2p$ and pick $r_0 > 0$ sufficiently large  so that
\begin{equation} \label{decayofRic}
	F \geq p^5 \quad \text{and} \quad	|Rc| \leq \frac{1}{p^5} \quad \text{on}\ M \backslash D(r_0).
\end{equation}
In the rest of the paper, we shall use the following conventions. 

\begin{enumerate}
	\item[$\bullet$] $C$: a positive constant that may depend on the geometry of $D(r_0)$.

	\item[$\bullet$] $c$:  a positive constant depending only on the dimension $n$ and $R_0$.

	\item[$\bullet$] $c(p)$: a positive constant depending on $p$, $c$ and $C$.

\end{enumerate}
In addition, those constants may change from line to line.

\smallskip
Now we are ready to state our key integral curvature estimate. 
\begin{proposition} \label{prop}
	Let $(M^n, g, f)$ be an $n$-dimensional complete gradient expanding Ricci soliton with nonnegative Ricci curvature $Rc\geq 0$ and finite 
	{\it asymptotic scalar curvature ratio}
	$$\limsup_{r\to \infty}  R \ \!r^2< \infty. $$
	Then, for any constant $a>0$, there exists a constant $c\ge 1$ such that if $p>a+R_0+\frac{n}{2}+c$
we have
	\begin{equation}
		[1-p^{-1}(a+R_0+\frac{n}{2}+c)]\int_{M}\left\vert Rm \right\vert ^{p}F^{a}\phi ^{q} \leq c(p),
	\end{equation}	
where $c(p)$ is in the order of $p^p$. 
\end{proposition}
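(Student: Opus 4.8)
The plan is to test the differential inequality $\Delta_f|Rm|\ge -|Rm|-c|Rm|^2$ of Lemma \ref{diffeqofnorm} (understood in the barrier sense, $|Rm|$ being only Lipschitz; one may equally use $(|Rm|^2+\eps^2)^{1/2}$ and let $\eps\to0$) against the nonnegative, compactly supported weight $|Rm|^{p-1}F^a\phi^q$, and to integrate over $D(r)$. Writing $u=|Rm|$, I would integrate $u^{p-1}F^a\phi^q\,\Delta_fu$ by parts twice: once to move the full Laplacian off $u$, which produces the favourable term $-(p-1)\int u^{p-2}|\na u|^2F^a\phi^q$ together with cross terms carrying $\langle\na u,\na F\rangle$ and $\langle\na u,\na\phi\rangle$; and once on the drift part $\tfrac1p\int F^a\phi^q\langle\na F,\na(u^p)\rangle$, substituting the soliton identities $\Delta_fF=F$, $\Delta F=R+\tfrac n2$ and $|\na F|^2=F-R-\tfrac n2$ from (2.3)--(2.4) together with $\na\phi=-\na F/r$. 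After absorbing the $\na u$ cross terms into the favourable term by Young's inequality, this should yield an inequality of the schematic form
\[
\Bigl[\,1-\tfrac1p\bigl(a+R_0+\tfrac n2+c\bigr)\Bigr]\!\int_M|Rm|^pF^a\phi^q\ \le\ c(p)\ +\ c\!\int_{D(r)}|Rm|^{p+1}F^a\phi^q,
\]
in which the $\tfrac ap$ comes from $\Delta_f(F^a)=aF^a+O(F^{a-1})$, the $\tfrac1p(R_0+\tfrac n2)$ from $\Delta F=R+\tfrac n2$ together with $0\le R\le R_0$, the extra $\tfrac cp$ from the Young absorption of the cross terms and of the cutoff terms---where one uses $F\le r$ on $D(r)$, $|\na\phi|^2=|\na F|^2/r^2\le F/r^2$, and $F^{a-1}\le p^{-5}F^a$ on $M\setminus D(r_0)$---and $c(p)$, of order $p^p$, collects the constants produced by those Young inequalities (whose coefficients involve powers of $q=2p$) together with the contribution of the compact piece $D(r_0)$, on which $|Rm|\le C$ and $F\le r_0$.

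The decisive step is then to dispose of the quadratic curvature term $c\int_{D(r)}|Rm|^{p+1}F^a\phi^q$. On $D(r_0)$ it is bounded by $c(p)$. On $D(r)\setminus D(r_0)$ I would control it by coupling the favourable gradient term $(p-1)\int u^{p-2}|\na u|^2F^a\phi^q$ with the Sobolev inequality of Lemma \ref{sobolev} (available here because $Rc\ge0$ forces a positive asymptotic volume ratio, cf.\ Lemma \ref{avr}) and a H\"older interpolation, in the spirit of \cite{MW17}, so as to bound $\int_{D(r)\setminus D(r_0)}|Rm|^{p+1}F^a\phi^q$ by $\tfrac12\int_M|Rm|^pF^a\phi^q$ plus a term of order $p^p$, which is then reabsorbed on the left. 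This is precisely the point at which the \emph{quantitative} decay $|Rc|\le R\le C/F$ supplied by \eqref{fas}, rather than the bare hypothesis $Rc\to0$, is essential: with $|Rc|$ as small as $p^{-5}$ off $D(r_0)$, and using the Bianchi-type identity $\na_lR_{ijkl}=\na_jR_{ik}-\na_iR_{jk}=-R_{ijkl}\na_lF$ of Lemma \ref{diffequation} together with $|\na F|^2\gtrsim r^2$ at infinity (Lemma \ref{potencialfunction} and (2.3)), one can trade one curvature factor in the cubic term for a genuinely small quantity. After this reabsorption the displayed inequality becomes $[1-p^{-1}(a+R_0+\tfrac n2+c)]\int_M|Rm|^pF^a\phi^q\le c(p)$, with $c(p)$ of order $p^p$ and all constants independent of $r$, as claimed.

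The step I expect to be the main obstacle is exactly this handling of the quadratic term $\int|Rm|^{p+1}F^a\phi^q$: in the shrinking setting of \cite{MW17} the analogous contribution is either absent or has the good sign, whereas here it is the ``trouble term'' referred to in Remark \ref{remark3.2}, and taming it seems to require the full strength of the finite asymptotic scalar curvature ratio. A secondary, more routine but still delicate, difficulty is to carry out all the integrations by parts and the Sobolev--Moser interpolation while keeping every constant free of $r$---in particular, correctly accounting for the lower powers of $\phi$ generated by $\na\phi$ and for the region near $x_0$, where $\na F$ degenerates, via the compactness of $D(r_0)$.
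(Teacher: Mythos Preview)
Your approach has a genuine gap at its very first step. Testing $\Delta_f|Rm|\ge -|Rm|-c|Rm|^2$ against $|Rm|^{p-1}F^a\phi^q$ does \emph{not} produce the schematic inequality you write down, because the linear term in the expanding case has the wrong sign. After your two integrations by parts one obtains (with $u=|Rm|$, $G=(p-1)\int u^{p-2}|\na u|^2F^a\phi^q$, $D$ the drift contribution, $X$ the cross terms)
\[
-G+X+D\ \ge\ -\int u^pF^a\phi^q-c\int u^{p+1}F^a\phi^q,
\]
i.e.\ a \emph{lower} bound $\int u^pF^a\phi^q+c\int u^{p+1}F^a\phi^q\ge G-X-D$. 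The drift identity $\Delta_fF=F$ only bounds $D$ by $\frac{a+R_0+n/2}{p}\int u^pF^a\phi^q$ from \emph{below}, which does not help here. This is exactly the obstruction the paper flags in the introduction: from $\Delta_f|Rm|\ge -|Rm|-c|Rm|^2$ neither the maximum principle nor a direct energy argument yields the desired bound.

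The paper's proof uses an entirely different mechanism to manufacture the dominant coefficient $p$. One starts not from a differential inequality for $|Rm|$ but from $-\int(\Delta F)|Rm|^pF^a\phi^q$, integrates by parts to obtain $\int\na F\cdot\na(|Rm|^p)F^a\phi^q$, and then applies the second Bianchi identity at the level of \emph{components},
\[
\na_hF\,\na_hR_{ijkl}\,R_{ijkl}=2\,\na_hF\,\na_lR_{ijkh}\,R_{ijkl},
\]
followed by another integration by parts in the index $l$. The resulting term $-2p\int R_{ijkh}(\na_h\na_lF)R_{ijkl}|Rm|^{p-2}F^a\phi^q$ combines with the convexity $\na^2F\ge\tfrac12g$ (from $Rc\ge0$) to yield $-p\int|Rm|^pF^a\phi^q$; this is where the factor $p$ on the left really comes from. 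The error terms produced are not the cubic $\int|Rm|^{p+1}F^a\phi^q$ you anticipate, but rather $\int|\na Rc|^2|Rm|^{p-1}F^{a+1}\phi^q$ and $\int|\na Rm|^2|Rm|^{p-3}F^{a-1}\phi^q$, and these are disposed of in two further lemmas (via the equation $\Delta_f(|Rc|^2|Rm|^{p-1})\ge\cdots$ together with the quadratic decay $|Rc|\le C/F$, and via $\Delta_f|Rm|^2\ge 2|\na Rm|^2-\cdots$ respectively). Your Sobolev-interpolation plan for the cubic term would also be problematic, since it would need a priori smallness of $\|Rm\|_{L^{n/2}}$ on $D(r)\setminus D(r_0)$, which is precisely what is not yet known; the Bianchi relation you cite converts $\mathrm{div}\,Rm$ into $\na Rc$, but it does not let you trade a factor of $|Rm|$ for $|Rc|$ inside $\int|Rm|^{p+1}F^a\phi^q$.
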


\smallskip
\begin{remark} Proposition 3.1 actually holds for gradient expanding Ricci solitons with finite asymptotic Ricci curvature ratio, i.e.,  $\limsup_{r\to \infty}  |Rc| \ \!r^2< \infty $, without assuming $Rc \ge 0$. Indeed, as we shall see below in the proof of Proposition 3.1, the condition of $Rc \geq 0$ is basically used only to guarantee that geodesic balls have at most polynomial (Euclidean) volume growth. 
However, note that any complete Riemannian manifold with quadratic Ricci curvature decay from below has polynomial volume growth (see \cite{CheeGT})\footnote{See also Corollary 4.11 in the very recent work of Chan-Ma-Zhang \cite{CMZ2022}.}. Meanwhile, under the finite asymptotic Ricci curvature ratio assumption, all the relevant properties or differential inequalities concerning the potential function $F$ would still hold outside a compact set. Hence, the nonnegative Ricci assumption in Proposition 3.1 is not essential.   
\end{remark}

We shall divide the proof of Proposition \ref{prop} into several lemmas and adapt the arguments in \cite{MW17}. 

\begin{lemma} \label{lemma1}
	Let $(M^n, g, f)$ be an $n$-dimensional complete gradient expanding Ricci soliton with nonnegative Ricci curvature $Rc\geq 0$.
	Suppose $p>a+R_0+\frac{n}{2}+1$, then
	\begin{eqnarray*}
	 [1-p^{-1}(a+R_0+\frac{n}{2})]\int_{M}\left\vert Rm \right\vert ^{p}F^{a}\phi ^{q} 
        &\leq & 4 \int_{M} |\na Rc|^2|Rm|^{p-1}F^{a+1}\phi^q \\
 &&  + \ cp^2\int_M |\na Rm|^2|Rm|^{p-3} F^{a-1}\phi^{q}\\
&& + \ c(p).     
	\end{eqnarray*}
\end{lemma}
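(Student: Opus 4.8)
The plan is to start from the differential identity $\Delta_f Rm = -Rm + Rm\ast Rm$ (Lemma~\ref{diffequation}), which gives the pointwise inequality $\Delta_f|Rm| \ge -|Rm| - c|Rm|^2$ of Lemma~\ref{diffeqofnorm}, and to feed it into a weighted integration-by-parts scheme with the test function $|Rm|^{p-1}F^a\phi^q$. More precisely, I would multiply $\Delta_f|Rm|$ (or better, work with $|Rm|^2$ via the Bochner-type inequality $\Delta_f|Rm|^2 \ge 2|\na Rm|^2 - 2|Rm|^2 - c|Rm|^3$ to avoid the singularity of $|Rm|$ at its zeros) by $|Rm|^{p-2}F^a\phi^q$ and integrate against the weighted measure $e^{-f}$, using that $\int_M (\Delta_f u)\, v \, e^{-f} = -\int_M \langle \na u, \na v\rangle e^{-f}$. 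The term $-2|Rm|^2$ produces, after absorbing the power, exactly the leading term $-\int |Rm|^p F^a\phi^q$ on the left, while the cubic term $c|Rm|^3$ combines with the $|Rm|^p$ integral too, contributing the $c$ inside the bracket $[1-p^{-1}(a+R_0+\frac n2 + c)]$ — wait, more carefully: the cubic term is controlled using $|Rm|\le$ something, or is moved to the right; the precise bookkeeping of which constant lands where is the routine part.

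The key algebraic inputs are: (a) $\na\phi = -\na F/r$ and $\Delta\phi = -\Delta F/r = -(R+\frac n2)/r$ on $D(r)$, so derivatives of $\phi$ are controlled by $|\na F|\le cF^{1/2}$ and by $R_0 + \frac n2$; (b) $\na F$ has length $\sim F^{1/2}$ and $\na_i\na_j F \ge \frac12 g_{ij}$, so terms like $\langle \na F, \na(F^a)\rangle = aF^{a-1}|\na F|^2$ and $\Delta_f F = F$ can be used to integrate the weight $F^a$ by parts and generate the factor $a$ in the bracket; (c) the drift term $\nabla f\cdot\nabla(\cdots)$, when it hits $|Rm|^{p-1}$ via $\na|Rm|$, is handled by Cauchy–Schwarz, splitting off $\eps\int|\na Rm|^2|Rm|^{p-3}F^{a-1}\phi^q$ (this is where the $cp^2$ coefficient on the second term on the right comes from, since the cross term carries a factor $\sim p$ from differentiating the power and the weights contribute another $F$-power mismatch absorbed by $|\na F|^2/F \le c$) and a $\int|Rm|^p F^{a+?}\phi^?$ remainder to be absorbed on the left or bounded by $c(p)$ on the region where $F$ is bounded; (d) the term coming from $Rm\ast Rm$ expanded via the second Bianchi identity $\na_l R_{ijkl} = -R_{ijkl}\na_l F$ — actually this is what lets one trade a derivative term for $|Rm|\cdot|\na F|$, producing the $|\na Rc|^2$ integral on the right via the relation between $\na Rm$ and $\na Rc$ in Lemma~\ref{diffequation} ($\na_l R_{ijkl} = \na_j R_{ik} - \na_i R_{jk}$). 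The appearance of $4\int|\na Rc|^2|Rm|^{p-1}F^{a+1}\phi^q$ suggests that in one of the integrations by parts a term $|Rm|\ast\na^2 Rm$ or $\langle Rm, \Delta Rm\rangle$ is re-expressed using the soliton identities so that second derivatives of $Rm$ reduce to first derivatives of $Rc$, with the extra power $F^{a+1}$ accounting for the $|\na F|^{?}$ picked up.

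Concretely the steps I would carry out, in order: (1) write $I := \int_M |Rm|^p F^a\phi^q \, e^{-f}$ and compute $\Delta_f(|Rm|^p/p) \ge$ (using the $|Rm|^2$ Bochner inequality raised to the $p/2$ power) $\ge |Rm|^{p-2}|\na Rm|^2(\text{something}) - |Rm|^p - \tfrac c2 |Rm|^{p+1}$; (2) multiply by $F^a\phi^q\, e^{-f}$, integrate, move the $\Delta_f$ onto the weight by parts; (3) expand $\na(F^a\phi^q) = aF^{a-1}\phi^q\na F + qF^a\phi^{q-1}\na\phi$ and use $\na\phi = -\na F/r$, $|\na F|^2 \le cF$, $-\Delta_f F = -F$, to collect the coefficient $-p^{-1}(a + \text{const})$ times $I$ on the correct side; (4) on the piece where $\Delta\phi$ or lower-order soliton curvature terms survive, bound them by $(R_0+\frac n2)$ times $I$, contributing to the bracket; (5) re-express the surviving $|\na Rm|^2$ and the curvature-quadratic term using the Bianchi identities to produce the $\int|\na Rc|^2|Rm|^{p-1}F^{a+1}\phi^q$ and $\int|\na Rm|^2|Rm|^{p-3}F^{a-1}\phi^q$ terms with the stated constants $4$ and $cp^2$; (6) all integrals over $D(r_0)$ (where $F\le r_0$, $\phi\le 1$, and curvature is bounded) are estimated crudely by $c(p)$, using the volume bound $V(r)\le cr^{n/2}$ and that $e^{-f} \sim e^{F}$ is harmless on a compact set — more care is needed because $e^{-f} = e^{F - n/2}$ grows, but it is matched by the decay of $\phi^q$ and the smallness of $|Rm|$ from (iii), and a direct estimate gives the claimed order $p^p$ for $c(p)$, coming from the $q = 2p$ power in $\phi^q$ and factors of $p$ accumulated in the Cauchy–Schwarz steps.

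The main obstacle I anticipate is the careful tracking of the weight $e^{-f}$ together with the convexity of $F$: unlike a plain Riemannian integration by parts, every use of $\Delta_f$ versus $\Delta$ changes the sign structure, and the crucial cancellation that isolates the clean coefficient $[1 - p^{-1}(a + R_0 + \frac n2 + c)]$ on the left relies on $\Delta_f F = F$ conspiring exactly with $aF^{a-1}|\na F|^2$ (via $|\na F|^2 = F - R - \frac n2$) to produce the linear-in-$a$ term and on the lower-order curvature terms $R + \frac n2 \le R_0 + \frac n2$ producing the rest; getting the inequality to point the right way (so that the bracket is a genuine positive multiple of $I$ that can be divided out once $p$ is large) is the delicate heart of the argument. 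A secondary difficulty is ensuring that the "bad" integral $\int|\na Rm|^2|Rm|^{p-3}F^{a-1}\phi^q$ genuinely appears with a coefficient that is only polynomial in $p$ (the stated $cp^2$), since it will later have to be reabsorbed by a separate estimate (Lemma~\ref{lemma1} is only the first of several lemmas), so one must not be wasteful in the Cauchy–Schwarz splitting.
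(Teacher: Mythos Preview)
Your proposal has two genuine gaps that prevent it from yielding the lemma.

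\medskip
\textbf{The weighted measure.} You integrate against $e^{-f}\,dV$. For an \emph{expanding} soliton with $Rc\ge 0$ one has $-f = F - \tfrac n2 \sim \tfrac14 r^2$, so $e^{-f}$ grows like $e^{r^2/4}$. The cutoff $\phi^q$ is merely bounded by $1$ on $D(r)$; it cannot ``match'' this exponential growth, contrary to what you suggest. The paper's integrals are with respect to the \emph{plain} volume form, and the self-adjointness of $\Delta_f$ with weight $e^{-f}$ is never invoked. If you carry your scheme through you will prove an inequality for $\int |Rm|^p F^a\phi^q e^{-f}$, which is not the stated one and is not useful for the Moser iteration downstream.

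\medskip
\textbf{The source of the factor $p$.} Your starting inequality $\Delta_f|Rm|^2 \ge 2|\na Rm|^2 - 2|Rm|^2 - c|Rm|^3$, multiplied by $|Rm|^{p-2}F^a\phi^q$, produces on the left a coefficient of order $1$ (from the $-2|Rm|^2$ term), not of order $p$. After dividing by $p$ you would get a bracket like $[2p^{-1} - \cdots]$, not $[1 - p^{-1}(a+R_0+\tfrac n2)]$. The paper obtains the crucial factor $p$ by a completely different mechanism: one starts from $-\int(\Delta F)|Rm|^pF^a\phi^q$ (using $\Delta F \le R_0+\tfrac n2$), integrates by parts to get $\int \na F\cdot\na(|Rm|^p)F^a\phi^q$, then uses the second Bianchi identity in the form $\na_hF\,\na_hR_{ijkl}\,R_{ijkl} = 2\,\na_hF\,\na_lR_{ijkh}\,R_{ijkl}$ to shift the derivative index, and integrates by parts \emph{again}. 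This second integration by parts lands the derivative on $\na_hF$, producing $-2p\int R_{ijkh}(\na_h\na_lF)R_{ijkl}|Rm|^{p-2}F^a\phi^q$; the Hessian bound $\na\na F\ge\tfrac12 g$ then yields $\le -p\int|Rm|^pF^a\phi^q$. This is where both the factor $p$ and the positivity of the bracket come from. The remaining boundary terms from the second integration by parts all contain $R_{ijkl}\na_lF = -(\na_jR_{ik}-\na_iR_{jk})$, which is exactly how the $|\na Rc|^2$ integral arises with the extra $F^{a+1}$ (from $|\na F|^2\le F$). Your outline alludes to the Bianchi identity but does not identify this double-integration-by-parts as the engine of the proof; the Bochner route you propose cannot produce the required coefficient structure.
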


\begin{remark} \label{remark3.2} In Lemma 3.1, the first two terms on the right hand side of the inequality are different from the shrinking case in \cite{MW17}. Also note that Lemma 3.1 does not require the {\it finite asymptotic scalar curvature ratio} assumption.
\end{remark} 

\begin{proof}
	Since $\Delta F \leq R_0 +n/2$ by (2.4) and Lemma 2.3, by integration by parts, we have  
	\begin{eqnarray*}
		-(R_0+\frac{n}{2})\int_{M}\left\vert Rm \right\vert ^{p}F^{a}\phi ^{q} 
		&\leq & -\int_{M} (\Delta F) \left\vert Rm \right\vert ^{p}F^{a}\phi ^{q} \\
		&=&  \int_M \na F \cdot \na(|Rm|^p)F^a\phi^q \\
		&& +  a \int_M  |Rm|^p |\na F|^2 F^{a-1}\phi^q \\
		&& +  q \int_M  |Rm|^p F^{a}\phi^{q-1} \na F\cdot\na \phi \\ 
		&\leq & \int_M \na F \cdot \na(|Rm|^p)F^a\phi^q + a\int_M |Rm|^pF^a\phi^q,
	\end{eqnarray*}
	where in the last inequality we have used the fact $|\na F|^2 <F$ from (2.3) and $\na \phi=-\na F/r$. 
	It then follows from the second Bianchi identity, as in \cite{MW17},  that 
	\begin{eqnarray*}
		-(a+R_0+\frac{n}{2})\int_{M}\left\vert Rm \right\vert ^{p}F^{a}\phi ^{q} 
		&\leq& \int_M \na F \cdot \na(|Rm|^p)F^a\phi^q \\
		&=& p\int_M (\na_hF \cdot \na_hR_{ijkl}) R_{ijkl}|Rm|^{p-2}F^a\phi^q \\
		&=& 2p\int_M (\na_h F \cdot \na_lR_{ijkh}) R_{ijkl}|Rm|^{p-2}F^a\phi^q. 
	\end{eqnarray*}
	Performing integration by parts again, we obtain
	\begin{eqnarray*}
		-(a+R_0+\frac{n}{2})\int_{M}\left\vert Rm \right\vert ^{p}F^{a}\phi ^{q} 
		&\leq& -2p\int_M R_{ijkh}(\na_h\na_lF) R_{ijkl} \ \! |Rm|^{p-2}F^a\phi^q \\
		&& -2p\int_M  (R_{ijkh}\na_hF) (\na_lR_{ijkl}) \ \!  |Rm|^{p-2}F^a\phi^q \\
		&& -2p\int_M (R_{ijkh}\na_hF) R_{ijkl} \na_l(|Rm|^{p-2})F^a\phi^q \\
		&& -2pa\int_M  |R_{ijkl} \na_lF|^2 |Rm|^{p-2}F^{a-1}\phi^q \\
		&& + \frac{4p^2}{r}\int_M  |R_{ijkl} \na_lF|^2 |Rm|^{p-2}F^{a}\phi^{q-1}.
	\end{eqnarray*}

	Since $Rc\geq 0$ implies $\na_i\na_jF \geq \frac{1}{2}g_{ij}$, it follows that 
	\begin{eqnarray*}
		-2p\int_M R_{ijkh}(\na_h\na_lF) R_{ijkl} |Rm|^{p-2}F^a\phi^q  \leq -p \int_{M}\left\vert Rm \right\vert ^{p}F^{a}\phi ^{q}.
	\end{eqnarray*}	
	Thus, by also using the last equality in Lemma 2.4, we get 
	\begin{eqnarray*}
		&&[p-(a+R_0+\frac{n}{2})]\int_{M}\left\vert Rm \right\vert ^{p}F^{a}\phi ^{q} \\
		&\leq & -2p\int_M (R_{ijkh} \na_hF) R_{ijkl} \na_l (|Rm|^{p-2})F^a\phi^q \\
		&&+	2p\int_{M} |R_{ijkl}\na_lF|^2 |Rm|^{p-2} F^{a}\phi^{q} \\
		&&+ \frac{4p^2}{r} \int_{M} |R_{ijkl}\na_lF|^2 |Rm|^{p-2} F^{a}\phi^{q-1} \\
        &=&I +II +III.
	\end{eqnarray*}
	
For the first term, by using Lemma 2.4 again, we have
	\begin{eqnarray*}
		I &=&-2p\int_M (R_{ijkh} \na_hF) R_{ijkl}(\na_l|Rm|^{p-2})F^a\phi^q \\
		&=& 2p\int_M (\na_jR_{ik}-\na_iR_{jk})R_{ijkl}(\na_l|Rm|^{p-2})F^a\phi^q\\
		&=& 2p(p-2)\int_M (\na_jR_{ik}-\na_iR_{jk}) R_{ijkl}(\na_l|Rm|)|Rm|^{p-3}F^a\phi^q \\
		&\leq&	4p^2\int_{M} |\na Rc| |\na Rm| |Rm|^{p-2} F^{a}\phi^{q} \\
		&\leq&  p \int_{M} |\na Rc|^2|Rm|^{p-1}F^{a+1}\phi^q + 4p^3\int_M |\na Rm|^2|Rm|^{p-3} F^{a-1}\phi^{q}.
	\end{eqnarray*}

	On the other hand, by Lemma \ref{diffequation}, we have
	\begin{eqnarray*} 
		II &=& 2p\int_M |R_{ijkl}\na_lF|^2 |Rm|^{p-2} F^a\phi^q \\
		& = & 2p\int_M |\na_i R_{jk}-\na_jR_{ik}|^2 |Rm|^{p-2} F^a\phi^q \\ 
		& \leq & 8p \int_M |\na Rc|^2 |Rm|^{p-2} F^a\phi^q \\
		& \leq & cp \int_M |\na Rc||\na Rm| |Rm|^{p-2} F^a\phi^q \\
		& \leq & p \int_M |\na Rc|^2 |Rm|^{p-1} F^{a+1}\phi^q + cp \int_M |\na Rm|^2 |Rm|^{p-3} F^{a-1}\phi^q.
	\end{eqnarray*}

	Finally, since $|\na F|^2\le F\le  r$ on $D(r)$, by Lemma \ref{diffequation} and Young's inequality,
	\begin{eqnarray*} 
		III &=& \frac{4p^2}{r} \int_{M} |R_{ijkl}\na_lF|^2 |Rm|^{p-2} F^{a}\phi^{q-1} \\
		& \leq & 4p^2 \int_{M} |R_{ijkl}\na_lF|^{\frac{2p}{p+1}} |R_{ijkl}\na_lF|^{\frac{2}{p+1}} |Rm|^{p-2} F^{a-1}\phi^{q-1} \\
		& \leq & 16p^2 \int_{M} |\na Rc|^{\frac{2p}{p+1}} |Rm|^{\frac{2}{p+1}}F^{\frac{1}{p+1}} |Rm|^{p-2} F^{a-1}\phi^{q-1} \\
		& = & 16p^2 \int_{M} |\na Rc|^{\frac{2p}{p+1}} |Rm|^{\frac{p(p-1)}{p+1}}F^{a-1+\frac{1}{p+1}}\phi^{q-1} \\
		& = & 16p^2 \int_{M} \left( |\na Rc|^{2} |Rm|^{p-1}F^{a+1}\phi^{q} \right)^{\frac{p}{p+1}}\cdot \left( F^{a-2p} \phi^{q-p-1} \right)^{\frac{1}{p+1}} \\
		& \leq & 2p \int_M |\na Rc|^2 |Rm|^{p-1} F^{a+1}\phi^q + c(p) \int_M F^{a-2p}\phi^{p-1} \\
		& \leq & 2p \int_M |\na Rc|^2 |Rm|^{p-1} F^{a+1}\phi^q + c(p).
	\end{eqnarray*}
	Here, in the last inequality, we have used the assumption $p>a+R_0+\frac{n}{2}+1$ and the fact that $(M, g)$ has at most Euclidean volume growth to deduce that $\int_M F^{a-2p} \leq c$.

	Therefore, 
	\begin{eqnarray*}
		&& [p-(a+R_0+\frac{n}{2})]\int_{M}\left\vert Rm \right\vert ^{p}F^{a}\phi ^{q} \\
		&\leq & 4p \int_{M} |\na Rc|^2|Rm|^{p-1}F^{a+1}\phi^q \\
		&& + cp^3\int_M |\na Rm|^2|Rm|^{p-3} F^{a-1}\phi^{q} +c(p).
	\end{eqnarray*}
	This completes the proof of Lemma \ref{lemma1}.

\end{proof}

\begin{remark}
	In the proof of Lemma \ref{lemma1}, as well as the proofs of Lemma \ref{lemma2} and Lemma \ref{lemma3} below, the constant $c(p)$ could be in the order of $p^p$ after applying Young's inequality.
\end{remark}

\smallskip
\begin{lemma} \label{lemma2}
	Let $(M^n, g, f)$ be an $n$-dimensional complete gradient expanding Ricci soliton with nonnegative Ricci curvature $Rc\geq 0$ and finite 
	{\it asymptotic scalar curvature ratio}
	$$\limsup_{r\to \infty}  R \ \!r^2< \infty. $$
	Suppose $p>a+\frac{n}{2}+1$, then
	\begin{eqnarray*}
		2 \int_M |\na Rc|^2 |Rm|^{p-1} F^{a+1}\phi^{q}
		&\leq & cp^3\int_M |\na Rm|^2|Rm|^{p-3}F^{a-1}\phi^{q} \\
		&&+ \frac{c}{p^2}\int_M |Rm|^pF^a\phi^q + c(p).
	\end{eqnarray*} 
\end{lemma}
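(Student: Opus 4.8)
The plan is to bound the ``bad'' term $\int_M |\na Rc|^2 |Rm|^{p-1} F^{a+1}\phi^q$ by integrating by parts after writing it as a divergence-type expression, so as to move one derivative off $Rc$ and onto the remaining factors. Concretely, I would start from the weighted Bochner-type inequality for $|Rc|^2$ in Lemma~\ref{diffeqofnorm},
\[
\Delta_f |Rc|^2 \ge 2|\na Rc|^2 - 2|Rc|^2 - 4|Rm||Rc|^2,
\]
which gives
\[
2\int_M |\na Rc|^2 |Rm|^{p-1} F^{a+1}\phi^q
\le \int_M (\Delta_f |Rc|^2)\,|Rm|^{p-1} F^{a+1}\phi^q
+ \int_M (2|Rc|^2 + 4|Rm||Rc|^2)|Rm|^{p-1} F^{a+1}\phi^q .
\]
For the last two terms I would use property (iii), $|Rc|\le R\le C/F$, to absorb the $F^{a+1}$ weight: $|Rc|^2 F^{a+1}\le C^2 F^{a-1}$ and $|Rm||Rc|^2 F^{a+1} \le C^2 |Rm| F^{a-1}$. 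The first of these contributes a term $\le c(p)\int_M |Rm|^{p-1}F^{a-1}\phi^q$, which by Young's inequality (splitting exponents $p-1$ and the constant into $p$ and $1$) is bounded by $\frac{c}{p^2}\int_M |Rm|^p F^a\phi^q + c(p)$ once we also use $\int_M F^{a-1}\phi^q \le \int_M F^{a-1} < \infty$ (Euclidean volume growth plus $a - 1 < a + \frac n2 < p$); the second contributes $\le c(p)\int_M |Rm|^p F^{a-1}\phi^q$, handled the same way.

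The main work is the $\Delta_f$ term. Using that $\Delta_f = \Delta - \na f\cdot\na$ is self-adjoint with respect to $e^{-f}\,dV$, but here we integrate against $dV$, so I would instead integrate by parts with the plain Laplacian and carry the drift term explicitly; equivalently, write $\int_M (\Delta_f u)\,w = \int_M u\,\Delta_f w + (\text{boundary-type terms from }\na f)$ after pairing against $w\,e^{-f}$ and noting $\phi$ has compact support. Either way, two integrations by parts move both derivatives from $|Rc|^2$ onto the test function $w = |Rm|^{p-1}F^{a+1}\phi^q$. Differentiating $w$ produces: a factor $\na(|Rm|^{p-1}) = (p-1)|Rm|^{p-2}\na|Rm|$, hence terms with $|\na Rm||Rm|^{p-2}$; a factor $\na(F^{a+1}) = (a+1)F^a\na F$ with $|\na F|^2 \le F$; and a factor $\na(\phi^q) = q\phi^{q-1}\na\phi = -q\phi^{q-1}\na F/r$. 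Second derivatives similarly produce $|\na^2 Rm||Rm|^{p-2}$ and $|\na Rm|^2|Rm|^{p-3}$ (from differentiating $|Rm|^{p-2}$), together with $\Delta F = R + \frac n2 \le R_0 + \frac n2$, and mixed terms. The key point is that $|Rc|^2 \le C^2/F^2$, so each appearance of $|Rc|^2$ kills two powers of $F$: a term with $k$ derivatives landing on $F^{a+1}$ and the rest on $|Rm|^{p-1}$ will have weight at most $F^{a-1}$ times curvature factors, exactly matching the target exponents $F^{a-1}$ in the conclusion.

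After this expansion the right-hand side consists of finitely many terms of the schematic forms (a) $c(p)\int_M |\na^2 Rm|\,|Rm|^{p-2}F^{a-1}\phi^q$, (b) $c(p)\int_M |\na Rm|^2|Rm|^{p-3}F^{a-1}\phi^q$, (c) $c(p)\int_M |\na Rm|\,|Rm|^{p-2}F^{a-1}\phi^{q-1}\cdot|\na F|$, and (d) lower-order terms already bounded as above. Term (b) is one of the allowed terms on the right of the Lemma (absorbing $c(p)$ into the stated $cp^3$, since $c(p)\sim p^p$ would be too large — so here I need to be careful and keep track that the only new factors are explicit powers of $p$ from $\na(|Rm|^{p-1})$ etc., giving $cp^3$, not $c(p)$). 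Term (a) I would handle by Young's inequality $|\na^2 Rm||Rm|^{p-2} \le \epsilon|\na^2 Rm|^2|Rm|^{p-3}F^{\text{(something)}} + \epsilon^{-1}|Rm|^{p-1}F^{\text{(something)}}$ — but wait, $|\na^2 Rm|$ is not available with a favorable sign here, so instead I would avoid producing $\na^2 Rm$ at all by integrating by parts only once (not twice), leaving $\int_M \na|Rc|^2 \cdot \na w$, i.e. $\int |Rc||\na Rc|\cdot|\na w|$, and then Cauchy--Schwarz to split off $|\na Rc|^2$ back — no, that is circular. The clean route is: integrate by parts once to get $-\int \na(|Rc|^2)\cdot\na(|Rm|^{p-1}F^{a+1}\phi^q) - \int \na(|Rc|^2)\cdot\na f\,(\cdots)$, bound $|\na(|Rc|^2)| = 2|Rc||\na Rc| \le \frac{2C}{F}|\na Rc|$, then Young: $\frac{2C}{F}|\na Rc|\cdot|\na w| \le \frac14|\na Rc|^2 |Rm|^{p-1}F^{a+1}\phi^q \cdot(\text{to absorb on the left}) + c(p)\frac{|\na w|^2}{F^2 |Rm|^{p-1}F^{a+1}\phi^q}$; expanding $|\na w|^2$ and using $|\na F|^2\le F$ then yields precisely terms of type (b), (c)=type (b) after another Young step, and (d). I expect the main obstacle to be exactly this bookkeeping — ensuring that (i) the coefficient in front of $\int|\na Rm|^2|Rm|^{p-3}F^{a-1}\phi^q$ comes out polynomially bounded ($cp^3$) rather than like $c(p)$, which forces me to introduce Young splittings with $p$-independent weights wherever the $F^{a-1}$ term is generated, and (ii) the $\phi^{q-1}$ versus $\phi^q$ discrepancy in the cut-off gradient terms is absorbed using $r \ge F$ on $D(r)$ and $q = 2p$, exactly as in the $III$ estimate of Lemma~\ref{lemma1}.
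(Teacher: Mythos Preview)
Your overall strategy---Bochner inequality for $|Rc|^2$, integrate by parts, use $|Rc|\le C/F$ to trade $|Rc|^2$ for $F^{-2}$, and Young's inequality to split cross terms---is exactly the paper's strategy. The confusion you run into (the detour through $|\nabla^2 Rm|$, the ``circular'' Cauchy--Schwarz, etc.) is caused by one organizational choice that differs from the paper.

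You set $u=|Rc|^2$ and integrate $\int(\Delta_f u)\,w$ by parts with $w=|Rm|^{p-1}F^{a+1}\phi^q$, so derivatives land on $|Rm|^{p-1}$. The paper instead applies the product rule \emph{first} to compute
\[
\Delta_f\big(|Rc|^2|Rm|^{p-1}\big)\ \ge\ 2|\nabla Rc|^2|Rm|^{p-1}-2p|Rc|^2|Rm|^{p-1}-cp|Rc|^2|Rm|^p-4p|\nabla Rc||\nabla Rm||Rc||Rm|^{p-2},
\]
using both the inequality for $\Delta_f|Rc|^2$ and the one for $\Delta_f|Rm|$. Now the integrand to move derivatives off is $u=|Rc|^2|Rm|^{p-1}$, and the test function is just $w=F^{a+1}\phi^q$. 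Two integrations by parts on $\int(\Delta u)\,w$ and one on $\int(\nabla F\cdot\nabla u)\,w$ put all derivatives onto $F^{a+1}\phi^q$ only, and $\Delta F$, $|\nabla F|^2\le F$, $\nabla\phi=-\nabla F/r$ are all explicit---no $|Rm|$-derivatives are ever produced in this term. The cross term $|\nabla Rc||\nabla Rm||Rc||Rm|^{p-2}$, which in your layout appears only after you differentiate $w$, now comes straight out of the product rule with a clean coefficient $4p$; one Young split (with weight $p^2$ vs.\ $p^{-2}$) gives the $cp^3\int|\nabla Rm|^2|Rm|^{p-3}F^{a-1}\phi^q$ term and an absorbable $\tfrac{c}{p}\int|\nabla Rc|^2|Rm|^{p-1}F^{a+1}\phi^q$ after using $|Rc|^2F^2\le C$.

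Your ``integrate by parts once'' route at the end would in fact work and is essentially equivalent: your term (A) is exactly the paper's term $IV$ up to a constant factor, and your terms (B), (C), and the drift term together reproduce the paper's $I_A+I_B$. But the product-rule organization removes all the ambiguity about where second derivatives go and makes the $p$-bookkeeping transparent, which is precisely the issue you flagged.
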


\begin{proof}
	First of all, by Lemma 2.5 and direct computations, we obtain
	\begin{eqnarray*} 
		\Delta_f (|Rc|^2 |Rm|^{p-1}) &=& (\Delta_f |Rc|^2) |Rm|^{p-1} + |Rc|^2 \Delta_f (|Rm|^{p-1}) \\ 
		&&+2 \na (|Rc|^2) \cdot \na (|Rm|^{p-1})\\	
		&\geq & 2 |\na Rc|^2|Rm|^{p-1} -2p |Rc|^2 |Rm|^{p-1} -cp |Rc|^2 |Rm|^{p} \\
		&&-4p|\na Rc| |\na Rm| |Rc| |Rm|^{p-2}.
	\end{eqnarray*} 
	Consequently, 
	\begin{eqnarray*}
		&&2 \int_M |\na Rc|^2 |Rm|^{p-1} F^{a+1}\phi^{q} \\
		&\leq & \int_M \Delta_f(|Rc|^2|Rm|^{p-1})F^{a+1}\phi^{q} \\
		&&+ 2p\int_M  |Rc|^2|Rm|^{p-1}F^{a+1}\phi^{q} \\
		&&+ cp\int_M |Rc|^2|Rm|^{p}F^{a+1}\phi^{q} \\
		&&+ 4p\int_M |\na Rc||\na Rm||Rc||Rm|^{p-2}F^{a+1}\phi^{q} \\
		&=& I+II+III+IV.
	\end{eqnarray*}

	On one hand, using the quadratic decay of $Rc$ and Young's inequality, we have
	\begin{eqnarray*} 
		II &=&  2p \int_M |Rc|^2 |Rm|^{p-1} F^{a+1}\phi^{q} \\
		&\leq& cp\int_M |Rm|^{p-1} F^{a-1}\phi^{q} \\
		&\leq& \frac{1}{p^2} \int_M |Rm|^{p} F^{a}\phi^{q} + c(p)\int_M F^{a-p}\phi^{q} \\
		&\leq& \frac{1}{p^2}\int_M |Rm|^{p} F^{a}\phi^{q} + c(p),
	\end{eqnarray*}
	where, in the last inequality, we have $\int_M F^{a-p}<c$ due to (2.6) and $p>a+\frac{n}{2}+1$.
	
	Moreover, by the quadratic decay of $Rc$ and (\ref{decayofRic}), we get
	\begin{eqnarray*} 
		III &=&  cp \int_M |Rc|^2 |Rm|^{p} F^{a+1}\phi^{q} \\
		&\leq& \frac{c}{p^2} \int_M |Rm|^{p} F^{a}\phi^{q} + C.
	\end{eqnarray*}
	
	On the other hand, since $\Delta_f u=\Delta u-\na f\cdot \na u=\Delta u+\na F\cdot \na u$, by integration by parts, we have
	\begin{eqnarray*} 
		I &=& \int_M \Delta_f(|Rc|^2|Rm|^{p-1})F^{a+1}\phi^{q} \\
		&=&  \int_M  \Delta (|Rc|^2 |Rm|^{p-1})F^{a+1}\phi^{q} \\
		&&+ \int_M \na F \cdot \na(|Rc|^2 |Rm|^{p-1})F^{a+1}\phi^{q} \\
		&=&  \int_M |Rc|^2 |Rm|^{p-1} \Delta (F^{a+1}\phi^{q}) \\
		&&+ \frac {q}{r}  \int_M |Rc|^2 |Rm|^{p-1}  |\na F|^2 F^{a+1}\phi^{q-1} \\
		&&- \int_M |Rc|^2 |Rm|^{p-1} [\Delta F + (a+1)F^{-1} |\na F|^2]F^{a+1}\phi^{q}\\
		&\leq & \int_M |Rc|^2 |Rm|^{p-1} \Delta (F^{a+1}\phi^{q}) \\
		&&+ 2p \int_M |Rc|^2 |Rm|^{p-1} F^{a+1}\phi^{q-1} \\
		& = & I_A + I_B.
	\end{eqnarray*}
	Here, we have used the facts that $|\na F|^2\le F\le  r$ on $D(r)$ and $\Delta F=R+\frac{n}{2}\geq 0$.

	Now, by direct computations, we have  
	\begin{eqnarray*} 
		&&\Delta (F^{a+1}\phi^{q})\\
		&=& \Delta (F^{a+1})\phi^{q}  + F^{a+1}\Delta (\phi^{q})  +2\na F^{a+1} \cdot \na \phi^{q}\\
		&\leq & [(a+1)F^{a}\Delta F+a(a+1)F^{a-1}|\na F|^2]\phi^{q}  \\
		& & +F^{a+1} [q \phi^{q-1} \Delta\phi +q(q-1)\phi^{q-2} |\na \phi|^2] \\
		&\leq & cp^2 F^{a}\phi^{q} + 4p^2  F^{a}\phi^{q-2}\\
		&\leq & cp^2 F^{a}\phi^{q-2},
	\end{eqnarray*}
	where we have used the facts that $\na F\cdot \na\phi\leq 0, \ \Delta \phi\leq 0, \ \Delta F\leq R_0+n/2$, $|\na F|^2 \leq F$, and $F |\na \phi|^2\leq 1$. 
	
    Hence, by Young's inequality, the quadratic decay of $Rc$ and (\ref{decayofRic}), we obtain
	\begin{eqnarray*} 
		I_A &=&  \int_M |Rc|^2 |Rm|^{p-1} \Delta (F^{a+1}\phi^{q}) \\
		&\leq& cp^2\int_M |Rc|^2 |Rm|^{p-1} F^{a}\phi^{q-2} \\
		&\leq& \frac{c}{p^2}\int_M |Rm|^{p-1} F^{a-1}\phi^{q-2} + C \\
		&\leq& \frac{1}{p^2} \int_M |Rm|^{p} F^{a}\phi^{q} + c(p)\int_M F^{a-p}\phi^{q-2p} + C \\
		&\leq& \frac{1}{p^2} \int_M |Rm|^{p} F^{a}\phi^{q} + c(p).
	\end{eqnarray*}

	Similarly, 
	\begin{eqnarray*} 
		I_B &=& 2p \int_M |Rc|^2 |Rm|^{p-1}  F^{a+1}\phi^{q-1} \\
		&\leq& cp\int_M |Rm|^{p-1} F^{a-1}\phi^{q-1} \\
		&\leq& \frac{1}{p^2}\int_M |Rm|^{p} F^{a}\phi^q + c(p)\int_M F^{a-p}\phi^{q-p} \\
		&\leq& \frac{1}{p^2}\int_M |Rm|^{p} F^{a}\phi^{q} + c(p).
	\end{eqnarray*}
	
	Finally, 
	\begin{eqnarray*} 
		IV &=&  4p\int_M |\na Rc||\na Rm||Rc||Rm|^{p-2}F^{a+1}\phi^{q} \\
		&\leq& 4p^3\int_M |\na Rm|^2|Rm|^{p-3} F^{a-1}\phi^{q} \\
		&&+ \frac{1}{p} \int_M |\na Rc|^2|Rc|^2|Rm|^{p-1}F^{a+3}\phi^{q} \\
		&\leq& 4p^3\int_M |\na Rm|^2|Rm|^{p-3} F^{a-1}\phi^{q} \\
		&&+ \frac{c}{p}\int_M |\na Rc|^2|Rm|^{p-1}F^{a+1}\phi^{q}.
	\end{eqnarray*}
	By combining the above estimates, we have completed the proof of Lemma \ref{lemma2}.

\end{proof}

\begin{lemma} \label{lemma3}
	Let $(M^n, g, f)$ be an $n$-dimensional complete gradient expanding Ricci soliton with nonnegative Ricci curvature $Rc\geq 0$.
	Suppose $p>a+\frac{n}{2}+1$, then
	\begin{eqnarray*}
		2\int_M |\na Rm|^2 |Rm|^{p-3}F^{a-1}\phi^{q}
		&\leq & \frac{c}{p^5}\int_M |Rm|^{p} F^{a}\phi^{q} + c(p).
	\end{eqnarray*}
\end{lemma}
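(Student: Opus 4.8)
The plan is to run the machinery of Lemma \ref{lemma2} with the single quantity $|Rm|^{p-1}$ in place of $|Rc|^2|Rm|^{p-1}$: produce a pointwise lower bound for $\Delta_f(|Rm|^{p-1})$ in which $|\na Rm|^2|Rm|^{p-3}$ appears with a positive coefficient, integrate it against the smooth compactly supported weight $F^{a-1}\phi^q$, and then move all derivatives off $|Rm|$ by integration by parts.

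First I would apply the chain rule to $|Rm|^{p-1}=(|Rm|^2)^{(p-1)/2}$, obtaining
\[
\Delta_f(|Rm|^{p-1})=\tfrac{p-1}{2}|Rm|^{p-3}\Delta_f|Rm|^2+\tfrac{(p-1)(p-3)}{4}|Rm|^{p-5}\big|\na|Rm|^2\big|^2,
\]
and discard the last summand, which is $\ge0$ since $p>a+\tfrac n2+1>3$. Inserting the second inequality of Lemma \ref{diffeqofnorm}, $\Delta_f|Rm|^2\ge2|\na Rm|^2-2|Rm|^2-c|Rm|^3$, gives
\[
\Delta_f(|Rm|^{p-1})\ \ge\ (p-1)|\na Rm|^2|Rm|^{p-3}-(p-1)|Rm|^{p-1}-cp|Rm|^{p}.
\]
Multiplying by $F^{a-1}\phi^q\ge0$ and integrating reduces the lemma to estimating $\int_M\Delta_f(|Rm|^{p-1})F^{a-1}\phi^q$, the term $(p-1)\int_M|Rm|^{p-1}F^{a-1}\phi^q$, and $cp\int_M|Rm|^{p}F^{a-1}\phi^q$. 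For the first, write $\Delta_f=\Delta+\na F\cdot\na$ and integrate by parts twice (no boundary term, as $\phi$ is supported in $D(r)$) to rewrite it as $\int_M|Rm|^{p-1}\big[\Delta(F^{a-1}\phi^q)-(\Delta F)F^{a-1}\phi^q-\na F\cdot\na(F^{a-1}\phi^q)\big]$, in which the middle contribution is $\le0$ because $\Delta F=R+\tfrac n2\ge0$. Expanding $\Delta(F^{a-1}\phi^q)$ and $\na F\cdot\na(F^{a-1}\phi^q)$ and using the standard inequalities on $D(r)$ — $\na F\cdot\na\phi=-|\na F|^2/r\le0$, $\Delta\phi\le0$, $|\na F|^2\le F\le r$, $F|\na\phi|^2\le1$, $\Delta F\le R_0+\tfrac n2$ — together with $F^{a-2}\le p^{-5}F^{a-1}$ on $M\setminus D(r_0)$ coming from $F\ge p^5$ in (\ref{decayofRic}), each surviving term is $\le cp\,|Rm|^{p-1}F^{a-1}\phi^{m}$ for some $m\in\{q-2,q-1,q\}$, plus a term supported on $D(r_0)$ whose integral is $\le c(p)$. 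Every term $\int_M|Rm|^{p-1}F^{a-1}\phi^{m}$ (including the leftover $(p-1)\int_M|Rm|^{p-1}F^{a-1}\phi^q$) is then dispatched by Young's inequality with exponents $\tfrac{p}{p-1}$ and $p$, exactly as the terms $II$, $I_A$, $I_B$ in Lemma \ref{lemma2}, bounding it by $\tfrac{1}{p^6}\int_M|Rm|^{p}F^a\phi^q+c(p)\int_M F^{a-p}$; and $\int_M F^{a-p}\le c$ since $(M,g)$ has at most Euclidean volume growth, $F\ge n/2>0$, and $p>a+\tfrac n2+1$. The cubic term is split: on $D(r_0)$ it is $\le c(p)$ (there $|Rm|,F$ are bounded and $\vol D(r_0)<\infty$), while on $M\setminus D(r_0)$ one uses $F^{a-1}\le p^{-5}F^a$ from (\ref{decayofRic}) to get $cp\int_M|Rm|^{p}F^{a-1}\phi^q\le\tfrac{c}{p^4}\int_M|Rm|^{p}F^a\phi^q+c(p)$. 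Collecting everything, this last contribution dominates, so $(p-1)\int_M|\na Rm|^2|Rm|^{p-3}F^{a-1}\phi^q\le\tfrac{c}{p^4}\int_M|Rm|^{p}F^a\phi^q+c(p)$; dividing by $(p-1)$ and multiplying by $2$ gives the claim, the constant $c(p)$ again being of order $p^p$.

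I expect the main effort to be the (routine) bookkeeping in the integration by parts: $\Delta(F^{a-1}\phi^q)$ and $\na F\cdot\na(F^{a-1}\phi^q)$ spawn many terms, and one must verify that none of them loses a power of $F$ in the wrong direction — this is where $F\ge n/2>0$ and $F\ge p^5$ on $M\setminus D(r_0)$ are used repeatedly. The one genuinely helpful structural point is that, since $|Rm|^{p-1}$ depends on $|Rm|$ alone, no curvature cross-terms $|\na Rc||\na Rm|$ arise as in Lemmas \ref{lemma1} and \ref{lemma2}, and the potentially dangerous chain-rule term $\tfrac{(p-1)(p-3)}{4}|Rm|^{p-5}\big|\na|Rm|^2\big|^2$ is manifestly nonnegative and can simply be thrown away, so no $|\na Rm|^2$ term with the wrong sign is ever created. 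Note also that, consistent with the hypotheses, only $Rc\ge0$ is used here — via $\na^2F\ge\tfrac12g$, $\tfrac n2\le\Delta F\le R_0+\tfrac n2$, $F\ge n/2$, and the Bishop bound $V(r)\le cr^{n/2}$ — the finite asymptotic scalar curvature ratio playing no role.
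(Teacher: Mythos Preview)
Your proof is correct and takes essentially the same route as the paper. The only organizational difference is that you compute $\Delta_f(|Rm|^{p-1})$ via the chain rule and integrate by parts once (parallel to the handling of term $I$ in Lemma~\ref{lemma2}), whereas the paper starts from $2|\na Rm|^2\le \Delta_f|Rm|^2+2|Rm|^2+c|Rm|^3$ multiplied by $|Rm|^{p-3}F^{a-1}\phi^q$ and integrates by parts in two stages---first producing terms of the form $\int(\na F\cdot\na|Rm|^2)|Rm|^{p-3}F^{\cdots}\phi^{\cdots}$, then rewriting these as $\frac{2}{p-1}\int\na F\cdot\na|Rm|^{p-1}\cdots$ before integrating by parts again; the resulting terms and their treatment (Young's inequality, $F\ge p^5$ on $M\setminus D(r_0)$, $\int_M F^{a-p}<\infty$) are identical.
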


\begin{remark}
	Note that, like Lemma 3.1,  Lemma \ref{lemma3} does not require the {\it finite asymptotic scalar curvature ratio} assumption either.
\end{remark}

\begin{proof}
First of all, note that
\begin{eqnarray*}
	2|\na Rm|^2 \leq \Delta|Rm|^2 + \na F\cdot \na |Rm|^2 + 2|Rm|^2 + c|Rm|^3.
\end{eqnarray*}

Therefore, by integration by parts, we have
\begin{eqnarray*}
	&& 2\int_M |\na Rm|^2 |Rm|^{p-3}F^{a-1}\phi^{q} \\
	& \leq & \int_M (\Delta|Rm|^2)|Rm|^{p-3}F^{a-1}\phi^{q} \\
	&& + \int_M (\na F\cdot \na |Rm|^2) |Rm|^{p-3}F^{a-1}\phi^{q} \\
	&& + 2\int_M |Rm|^{p-1}F^{a-1}\phi^{q}  \\
	&& + c\int_M |Rm|^{p}F^{a-1}\phi^{q} \\
	& \leq & -(a-1)\int_M (\na F\cdot \na |Rm|^2) |Rm|^{p-3}F^{a-2}\phi^{q} \\
	&& + \frac{q}{r} \int_M (\na F\cdot \na |Rm|^2) |Rm|^{p-3}F^{a-1}\phi^{q-1} \\
	&& + \int_M (\na F\cdot \na |Rm|^2) |Rm|^{p-3}F^{a-1}\phi^{q} \\
	&& + 2\int_M |Rm|^{p-1}F^{a-1}\phi^{q}  \\
	&& + c\int_M |Rm|^{p}F^{a-1}\phi^{q} \\
	&=& I+II+III+IV+V.
\end{eqnarray*}

It follows from integration by parts, $\Delta F\leq R_0+n/2$, $|\na F|^2 \leq F$ and (\ref{decayofRic}) that
\begin{eqnarray*}
	I &=&  -(a-1)\int_M ( \na F\cdot \na |Rm|^2 ) |Rm|^{p-3}F^{a-2}\phi^{q} \\
	&=&  -\frac{2(a-1)}{p-1}\int_M ( \na F\cdot \na |Rm|^{p-1} ) F^{a-2}\phi^{q} \\
	&=&  \frac{2(a-1)}{p-1}\int_M |Rm|^{p-1}[\Delta F+(a-2)F^{-1}|\na F|^2] F^{a-2}\phi^{q} \\
	&&- \frac{2(a-1)q}{(p-1)r}\int_M |Rm|^{p-1} |\na F|^2  F^{a-2}\phi^{q-1} \\
	&\leq& c\int_M |Rm|^{p-1} F^{a-1}\phi^{q} +C \\
	&\leq& \frac{1}{p^5} \int_M |Rm|^{p} F^{a}\phi^{q} + c(p)\int_M F^{a-p}\phi^{q} +C \\
	&\leq& \frac{1}{p^5} \int_M |Rm|^{p} F^{a}\phi^{q} + c(p),
\end{eqnarray*}
where, in the last two inequalities, we have again used Young's inequality, (2.6), and $p>a+\frac{n}{2}+1$.

Similarly, for $r \geq 1$, as $\Delta F=R+\frac{n}{2}> 0$ and $|\na F|^2 \leq F \le r$ \ \! over $D(r)$, 
\begin{eqnarray*}
	II + III &=&  \frac{q}{r}\int_M (\na F\cdot \na |Rm|^2) |Rm|^{p-3}F^{a-1}\phi^{q-1} \\
	&& + \int_M (\na F\cdot \na |Rm|^2) |Rm|^{p-3}F^{a-1}\phi^{q}\\
	&\leq& (2p+1)\int_M (\na F\cdot \na |Rm|^2) |Rm|^{p-3}F^{a-1}\phi^{q-1} \\
	&=&  \frac{2(2p+1)}{p-1}\int_M ( \na F\cdot \na |Rm|^{p-1} ) F^{a-1}\phi^{q-1} \\
	&=&  -\frac{2(2p+1)}{(p-1)}\int_M |Rm|^{p-1}[\Delta F+(a-1)F^{-1}|\na F|^2] F^{a-1}\phi^{q-1} \\
	&&+ \frac{2(2p+1)(q-1)}{(p-1)r}\int_M |Rm|^{p-1} |\na F|^2  F^{a-1}\phi^{q-2} \\
	&\leq& \frac{2(2p+1)(q-1)}{(p-1)}\int_M |Rm|^{p-1} F^{a-1}\phi^{q-2} \\
	&\leq& \frac{1}{p^5} \int_M |Rm|^{p} F^{a}\phi^{q} + c(p)\int_M F^{a-p}\phi^{q-2p} \\
	&\leq& \frac{1}{p^5} \int_M |Rm|^{p} F^{a}\phi^{q} + c(p).
\end{eqnarray*}

On the other hand, by Young's inequality, (2.6), and $p>a+\frac{n}{2}+1$, we get
\begin{eqnarray*}
	IV &=&  2\int_M |Rm|^{p-1} F^{a-1}\phi^{q} \\
	&\leq& \frac{1}{p^5} \int_M |Rm|^{p} F^{a}\phi^{q} + c(p),
\end{eqnarray*}

and 
\begin{eqnarray*} 
	V &=&  c \int_M |Rm|^{p} F^{a-1}\phi^{q} \\
	&\leq& \frac{c}{p^5}\int_M |Rm|^{p} F^{a}\phi^{q} +C \\ 
	&\leq& \frac{1}{p^5}\int_M |Rm|^{p} F^{a}\phi^{q} +c(p),
\end{eqnarray*}
where we have used (\ref{decayofRic}) in deriving the first inequality for $V$.

Combining all the estimates above, the proof of Lemma \ref{lemma3} is completed.

\end{proof}

Now we can conclude the proof of {\bf Proposition 3.1.} 

\proof
For any constant $a>0$,  let $p>a+R_0+\frac{n}{2}+c$ for some constant $c\geq 1$. Then, by combining Lemma \ref{lemma1}, Lemma \ref{lemma2} and Lemma \ref{lemma3}, Proposition \ref{prop} follows immediately.

\hfill $\Box$

\section{The proof of Main theorem}

In this section, we use the integral estimate in Section 3 and the De Giorgi-Nash-Moser iteration to prove our main result on the pointwise decay estimate of the curvature tensor $Rm$ as stated in the introduction (see also Theorem \ref{maintheorem}). 

\begin{theorem} \label{mainthm}
	Let $(M^n, g, f)$ be an $n$-dimensional complete gradient expanding Ricci soliton with nonnegative Ricci curvature $Rc\geq 0$ and finite 
	{\it asymptotic scalar curvature ratio}
	\begin{equation*} \label{aympscalar}
		\limsup_{r\to \infty}  R \ \!r^2< \infty,
	\end{equation*}
	Then $(M^n, g, f)$ has finite $\alpha$-asymptotic curvature ratio for any $0<\alpha<2$,
	\begin{equation}
		A_{\alpha} := \limsup_{r\to \infty} |Rm| \ \! r^{\alpha}< \infty.
	\end{equation}
Furthermore, there exist constant $C>0$ depending on $n$ and the geometry of $(M^n, g, f)$, sequences $\{r_j\} \to \infty$ and $\{\alpha_j\} \to 2$ such that 
\begin{equation*} 
	|Rm|(x) \leq  C (r(x)+1)^{-\alpha_j}
\end{equation*}
for any $x \in M\setminus B(x_0, r_j+1)$.
\end{theorem}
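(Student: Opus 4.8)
The plan is to upgrade the integral curvature bound of Proposition 3.1 to a pointwise bound via the De Giorgi–Nash–Moser iteration applied to the differential inequality $\Delta_f |Rm| \ge -|Rm| - c|Rm|^2$ from Lemma 2.5. First I would fix $\alpha$ with $0<\alpha<2$, choose $a = \alpha\,p/2$ (so that $|Rm|^p F^a$ scales like $(|Rm|\,r^{\alpha})^p$ up to constants, using $F \asymp r^2$ from Lemma 2.2), and note that Proposition 3.1 gives, for all sufficiently large $p$ (larger than $a + R_0 + n/2 + c$, which forces a constraint relating how large $p$ must be to how close $\alpha$ is to $2$), a bound $\int_M |Rm|^p F^{\alpha p/2} \phi^q \le c(p)$ with $c(p)$ of order $p^p$. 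Letting $r\to\infty$ so that $\phi \to 1$ on any fixed compact set, this yields a weighted $L^p$ bound $\int_{M} |Rm|^p F^{\alpha p/2} \le c(p) \sim p^p$ on all of $M$; equivalently $\big(\int_M (|Rm|\, F^{\alpha/2})^p\big)^{1/p} \le c^{1/p} p$, which already controls the weighted $L^p$ norms but not yet the sup.

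Next I would run Moser iteration locally. For a point $x$ with $r(x)$ large, work on the unit ball $B_x(1)$. The key inputs are: (i) a uniform Sobolev inequality on $B_x(1)$ — this follows from Lemma 2.8 and Lemma 2.9 (nonnegative Ricci plus positive asymptotic volume ratio), or equivalently the non-collapsing of Lemma 2.7 together with the fact that $|Rc|\le 1$ on $B_x(1)$ for $r(x)$ large (from property (iii), $|Rc| \le C/F \to 0$); (ii) the weighted Laplacian $\Delta_f$ differs from $\Delta$ by the first-order term $\nabla F\cdot\nabla$, but $|\nabla F| \le \sqrt F \asymp r$ is large — this is handled by the standard trick of rescaling the metric by $r(x)^{-2}$, after which $B_x(1)$ becomes a ball of radius $\asymp r(x)$ in the rescaled metric with $|\nabla F|$ bounded and $|Rm|$ rescaled by $r(x)^2$. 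The iteration applied to $u = |Rm|$ satisfying $\Delta_f u \ge -u - cu^2$ (absorbing the $u^2$ term since $u$ is small at large distance) then gives $\sup_{B_x(1/2)} |Rm| \le C \big(\int_{B_x(1)} |Rm|^{p_0}\big)^{1/p_0}$ for some fixed exponent $p_0$; feeding in the weighted $L^{p}$ bound above with the weight $F^{\alpha/2} \asymp r(x)^{\alpha}$ roughly constant on $B_x(1)$, one extracts $|Rm|(x) \lesssim r(x)^{-\alpha}$, i.e.\ $A_\alpha < \infty$.

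For the final refined statement I would make the dependence on $\alpha$ and $p$ explicit. Fix a large $p_j \to \infty$; for each $j$, apply the above with $\alpha = \alpha_j$ chosen as large as the constraint $p_j > \alpha_j p_j/2 + R_0 + n/2 + c$ permits, so $\alpha_j \to 2$ as $p_j \to \infty$. The Moser estimate with the $c(p_j)\sim p_j^{p_j}$ constant produces, after taking the $p_j$-th root, a constant $C$ independent of $j$ (the $p_j^{p_j}$ growth is killed by the $1/p_j$ power, and $(1 - p_j^{-1}(\dots))^{-1/p_j} \to 1$), valid on $M \setminus B(x_0, r_j+1)$ for a suitable $r_j \to \infty$ coming from where the smallness $|Rc| \le p_j^{-5}$ and $F \ge p_j^5$ of (3.4) kicks in. I expect the main obstacle to be the bookkeeping in the Moser iteration with the weight $F^a$ and the first-order $\nabla F$ term: one must verify that the rescaling absorbs $\nabla F$ cleanly and that the iteration constants, after extracting the $p$-th root, do not blow up as $p \to \infty$ — this is exactly where the precise order $c(p) \sim p^p$ in Proposition 3.1 is used, and getting a $j$-independent final constant $C$ is the delicate point.
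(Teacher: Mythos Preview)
Your overall architecture (Proposition 3.1 followed by Moser iteration on unit balls with a uniform Sobolev constant) matches the paper, and your choice $a=\alpha p/2$ is equivalent to the paper's $a=p-\mathrm{const}$. There are, however, two concrete points where your argument departs from the paper and one of them is a genuine gap.

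\medskip
\textbf{Handling the drift term.} You propose rescaling the metric by $r(x)^{-2}$ to tame $|\nabla F|\asymp r$. The paper does something more direct: starting from $\Delta_f|Rm|^2\ge 2|\nabla Rm|^2-2|Rm|^2-c|Rm|^3$, it uses Cauchy--Schwarz and Kato,
\[
\nabla F\cdot\nabla|Rm|^2\le \tfrac12|Rm|^2|\nabla F|^2+2|\nabla Rm|^2,
\]
to absorb the first-order term into the available $2|\nabla Rm|^2$, obtaining an honest inequality for the \emph{unweighted} Laplacian,
\[
\Delta|Rm|^2\ge -u\,|Rm|^2,\qquad u:=c\,(F+|Rm|).
\]
The Moser output is then $|Rm|(x)\le C_0\big(\int_{B_x(1)}u^n+1\big)^{1/p}\big(\int_{B_x(1)}|Rm|^p\big)^{1/p}$ with $C_0$ independent of $p$; since $\int_{B_x(1)}u^n\le c\,r(x)^{2n}$, the potential costs only $2n/p$ in the decay exponent, which vanishes as $p\to\infty$. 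This avoids any rescaling and also avoids your step of ``absorbing the $u^2$ term since $u$ is small,'' which would be circular before you know $|Rm|\to 0$.

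\medskip
\textbf{The uniform constant $C$.} Your claim that ``the $p_j^{p_j}$ growth is killed by the $1/p_j$ power'' is false: $(p_j^{p_j})^{1/p_j}=p_j\to\infty$, so your argument yields $|Rm|\,r^{\alpha_j}\le C p_j$, not a $j$-independent bound. The paper fixes this with a tail trick you omitted: since $I(r):=\int_{D(r)}|Rm|^pF^a$ increases to the finite limit $\int_M|Rm|^pF^a\le c(p)$, for each fixed $p$ there exists $r_p$ (this is where $r_j\to\infty$ really comes from, not from (3.2)) with
\[
\int_{M\setminus D(r_p)}|Rm|^pF^a\le 1.
\]
Hence $\int_{B_x(1)}|Rm|^p\le c\,(r(x)+1)^{-2a}$ with a constant \emph{independent of $p$} for $x\notin D(r_p+1)$, and after the Moser step one gets $|Rm|(x)\le C_0(r(x)+1)^{-2(a-n)/p}$ with $C_0$ independent of $p$. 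For the first assertion ($A_\alpha<\infty$ for fixed $\alpha$) your argument already suffices, but for the ``Furthermore'' clause you need this tail step.
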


\begin{proof}  As in Munteanu-Wang \cite{MW17}, we now combine Proposition 3.1 and the De Giorgi-Nash-Moser  iteration to obtain the pointwise curvature tensor decay estimate.  

First of all, for any $p>0$ large and $a>0$ such that $p>a+R_0+\frac{n}{2}+c>a+\frac{n}{2}+1$, by Proposition \ref{prop} we have
\begin{equation*}
	\int_{M}\left\vert Rm \right\vert ^{p}F^{a}\phi ^{q} \leq c(p).
\end{equation*}
Since the cut-off function $\phi \geq \frac{1}{2}$ on $D(r/2)$ by (\ref{cut-off}), it follows that
\begin{equation*}
	\int_{D(r/2)}\left\vert Rm \right\vert ^{p}F^{a} \leq c(p)
\end{equation*}
for $r>r_0$ arbitrarily large. 
Hence,
\begin{equation*}
	\int_{M}\left\vert Rm \right\vert ^{p}F^{a} \leq c(p).
\end{equation*}
Note that if we define 
\begin{equation*}
	I(r):=\int_{D(r)}|Rm|^pF^a, 
\end{equation*}
then clearly $I(r)$ is increasing in $r$ and 
\begin{equation*}
	\lim_{r\rightarrow \infty}I(r) = \int_{M}\left\vert Rm \right\vert ^{p}F^{a} \leq c(p).
\end{equation*}
Thus, for any fixed $p>0$ large there exists a constant $r_p>r_0$ such that
\begin{equation*}
	\int_{M\backslash D(r_p)} |Rm|^pF^a \leq 1.
\end{equation*}
Therefore, by Lemma \ref{potencialfunction}, we have
\begin{equation} \label{boundfor|Rm|}
	\int_{B_x(1)} |Rm|^p \leq c \left(r(x)+1\right)^{-2a}
\end{equation}
for any $x\in M\backslash D(r_p+1)$.

Next, we apply the Moser iteration to get the pointwise decay estimate for $Rm$ from (\ref{boundfor|Rm|}). We start by deriving an inequality satisfied by $\Delta |Rm|^2$. 

From Lemma \ref{diffeqofnorm}, we note that
$$ \Delta_f |Rm|^2 \geq 2|\na Rm|^2 - 2|Rm|^2 - c|Rm|^3. $$
Also, by using the Cauchy-Schwarz inequality and Kato's inequality, we have
\begin{eqnarray*} 
	\na F \cdot \na |Rm|^2 &=&  2|Rm|\na F\cdot \na |Rm| \\
	&\leq& \frac{1}{2} |Rm|^2 |\na F|^2 + 2|\na Rm|^2.
\end{eqnarray*}
Thus, 
\begin{eqnarray*} 
	\Delta |Rm|^2 &\geq&  2|\na Rm|^2 - 2|Rm|^2 - c|Rm|^3 - \na F \cdot \na |Rm|^2 \\
	&\geq& - 2|Rm|^2 - c|Rm|^3 - \frac{1}{2}(F-R-\frac{n}{2}) |Rm|^2 \\
	&\geq& -c(F+|Rm|)|Rm|^2 \\
	&=& -u|Rm|^2,
\end{eqnarray*}
where $u:=c(F+|Rm|)$.

\smallskip
By  Lemma \ref{avr} and Lemma  \ref{sobolev},  or by the Sobolev inequality in \cite{Saloff} together with the non-collapsing estimate of Carrillo and Ni in Lemma \ref{non-collapsing}, we know that the Sobolev inequality holds on the unit geodesic ball $B_x(1)$, with 
the Sobolev constant 
    $C_s$
independent of $x\in M$. Therefore, by applying the Moser iteration (see \cite{PLi} or \cite{PLi93}), we have
\begin{equation} \label{Nash-Moser_iteration2}
	|Rm|(x) \leq C_0\left( \int_{B_x(1)} u^n+1 \right)^{\frac{1}{p}}\left( \int_{B_x(1)} |Rm|^p \right)^{\frac{1}{p}},
\end{equation}
where $C_0>0$ depends only on $n$ and $C_s$. 
Note that, by (\ref{boundfor|Rm|}) and the Bishop volume comparison, we have
\begin{eqnarray*} 
		\int_{B_x(1)} |Rm|^n & \leq & \left( \int_{B_x(1)} |Rm|^p \right)^{\frac{n}{p}}\vol(B_x(1))^{\frac{p-n}{p}} \\
                 & \leq & c\left( r(x)+1 \right)^{-2a\cdot \frac{n}{p}}
\end{eqnarray*}
for any $x\in M\backslash D(r_p+1)$. 
Hence,
\begin{eqnarray} \label{boundforu^n2}
	\int_{B_x(1)} u^n
	&=& c\int_{B_x(1)} (F+|Rm|)^n  \\
	&\leq& c\int_{B_x(1)} F^n + c\int_{B_x(1)} |Rm|^n \notag \\
	&\leq& c(r(x)+1)^{2n}. \notag
\end{eqnarray}

Now, for $p>0$ large, we take
\begin{equation} \label{arelativetop}
	a=p-(\frac{n}{2}+R_0+c+1).
\end{equation}
Then, by (\ref{boundfor|Rm|})-(\ref{boundforu^n2}), we have
\begin{eqnarray} \label{pointwisefor|Rm|}
	|Rm|(x) &\leq&  C_0 \left( \int_{B_x(1)} u^n+1 \right)^\frac{1}{p} \left( \int_{B_x(1)} |Rm|^{p}\right)^\frac{1}{p} \notag \\
	&\leq& C_0 (r(x)+1)^{-\frac{2(a-n)}{p}}  
\end{eqnarray}
for $x \in M\backslash D(r_p+1)$. 

On the other hand, for any $\alpha \in (0,2)$,  then for $p$ sufficiently large we have 
$$ \frac{a-n}{p} = 1-\frac{\frac{n}{2}+R_0+c+1+n}{p} \geq \frac{\alpha}{2}. $$
Now, for $\alpha ,\ p$ and $a$ as above, by (\ref{pointwisefor|Rm|}) we have
\begin{eqnarray*} 
	|Rm|(x) &\leq&  C_0 (r(x)+1)^{-\frac{2(a-n)}{p}} \\
		&=& C_0 (r(x)+1)^{-\alpha}
\end{eqnarray*}
for any $x \in M\backslash D(r_p+1)$.

Furthermore, note also that we have $r_p \rightarrow \infty$ as $p \rightarrow \infty$. Thus, if we take $p=j \in {\mathbb N}$ and set 
\[ \alpha_j= \frac{2(a-n)}{p} = 2-\frac{3n+2R_0+2c+2}{j} \to 2,\]
then there exists a sequence $\{r_j\} \to \infty$  such that
\begin{equation*} 
	|Rm|(x) \leq  C_0 (r(x)+1)^{-\alpha_j}
\end{equation*}
for any $x \in M\backslash D(r_j+1)$. 

This completes the proof of Theorem \ref{mainthm}.

\end{proof}

\smallskip
In fact, as we mentioned in Remark 1.1, the same proof can be used to prove the following more general  curvature decay estimate.

\begin{theorem} \label{subalphadecaythm}
   	Let $(M^n, g, f)$ be an $n$-dimensional complete gradient expanding Ricci soliton with nonnegative Ricci curvature $Rc\geq 0$ and finite $\alpha_0$-asymptotic curvature ratio for any $0<\alpha_0\leq 2$,
	\begin{equation*} 
		\limsup_{r\to \infty}  R \ \!r^{\alpha_0}< \infty. 
	\end{equation*}
	Then, $(M^n, g, f)$ has finite $\alpha$-asymptotic curvature ratio for any $0<\alpha<\alpha_0$,  
	\begin{equation*}
		A_{\alpha} := \limsup_{r\to \infty} |Rm| \ \! r^{\alpha}< \infty.
	\end{equation*}
Furthermore, there exist constant $C>0$ depending on $n$ and the geometry of $(M^n, g, f)$, sequences $\{r_j\} \to \infty$ and $\{\alpha_j\} \to \alpha_0$ such that 
\begin{equation*} 
	|Rm|(x) \leq  C (r(x)+1)^{-\alpha_j}
\end{equation*}
for any $x \in M\setminus B(x_0, r_j+1)$.
\end{theorem}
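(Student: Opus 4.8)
The plan is to repeat, essentially verbatim, the proof of Theorem~\ref{mainthm} (the case $\alpha_0=2$), replacing the exponent $2$ by $\alpha_0$ everywhere the hypothesis on the scalar curvature ratio enters. First I would record how that hypothesis is used. Together with Lemma~\ref{potencialfunction} (so that $F$ is comparable to $r^2$ near infinity), the assumption $\limsup_{r\to\infty}R\,r^{\alpha_0}<\infty$ upgrades property (iii) of Section~3 to
\[
|Rc|\le R\le C\,F^{-\alpha_0/2},\qquad\text{hence}\qquad |Rc|^{k}F^{k\alpha_0/2}\le C^{k}\quad(k\ge 1),
\]
outside a compact set; properties (i) and (ii), the relation $q=2p$, the cut-off $\phi$ of~\eqref{cut-off}, and the normalization~\eqref{decayofRic} (now with a sufficiently large power $p^{N}$ of $p$, $N=N(\alpha_0)$, in place of $p^{5}$) are all kept as before.

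Next I would re-run Lemmas~\ref{lemma1}, \ref{lemma2}, \ref{lemma3} and the proof of Proposition~\ref{prop} with this single substitution. Lemmas~\ref{lemma1} and~\ref{lemma3} never used the scalar curvature ratio hypothesis, so they carry over unchanged. In Lemma~\ref{lemma2}, each place where a factor $|Rc|^2F^{a+1}$ was reduced to $C\,F^{a-1}$ by \emph{quadratic} decay is now reduced only to $C\,F^{a+1-\alpha_0}$, and likewise for the terms $III$, $I_A$, $I_B$ there; after applying Young's inequality exactly as before, the point to check is that every resulting error integral has the form $\int_M F^{s}\phi^q$ with $s<-n/2$ (so that it is bounded by a constant via~(2.6)) and that the coefficient of the retained term $\int_M|Rm|^pF^a\phi^q$ is still $O(1/p)$. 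This determines the admissible window for the weight $a$ in terms of $p$ and $\alpha_0$: one takes $a=\tfrac{\alpha_0}{2}p$ up to a bounded correction (which for $\alpha_0=2$ recovers the choice $a=p-(n/2+R_0+c+1)$ used in Theorem~\ref{mainthm}), and checks that for such $(p,a)$ all error terms are controlled, giving the analogue of Proposition~\ref{prop},
\[
\int_M|Rm|^pF^a\phi^q\le c(p).
\]

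From here the rest of the proof of Theorem~\ref{mainthm} transcribes word for word and only uses $Rc\ge0$, not the value of $\alpha_0$: since $\phi\ge\tfrac12$ on $D(r/2)$ and the right-hand side is independent of $r$, one gets $\int_M|Rm|^pF^a\le c(p)$, hence $\int_{M\setminus D(r_p)}|Rm|^pF^a\le 1$ for $r_p$ large, hence (using $F$ comparable to $r^2$) $\int_{B_x(1)}|Rm|^p\le c\,(r(x)+1)^{-2a}$ for $x$ far out; then the differential inequality $\Delta|Rm|^2\ge -u|Rm|^2$ with $u=c(F+|Rm|)$, the uniform Sobolev inequality on unit geodesic balls (Lemmas~\ref{avr}, \ref{sobolev} and~\ref{non-collapsing}), and the De~Giorgi--Nash--Moser iteration give
\[
|Rm|(x)\le C_0\,(r(x)+1)^{-2(a-n)/p}.
\]
Choosing $a$ as large as the integral estimate permits and letting $p=j\to\infty$ makes the exponent $2(a-n)/p$ tend to $\alpha_0$, which yields $A_\alpha<\infty$ for every $\alpha\in(0,\alpha_0)$ together with the stated sequential pointwise bound with $\alpha_j\to\alpha_0$ and $r_j\to\infty$.

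I expect the main obstacle to be exactly the re-bookkeeping in the second step: in the $\alpha_0=2$ proof several error terms in Lemma~\ref{lemma2} were harmless precisely because $|Rc|$ had quadratic decay, leaving two powers of $F$ (rather than $\alpha_0$) to be absorbed by Young's inequality and the Euclidean volume bound~(2.6); with only $|Rc|^2F^{a+1}\le C\,F^{a+1-\alpha_0}$ available one must verify there is still a window, with $a/p\to\alpha_0/2$, in which every such error integral converges and every retained coefficient is small. This is the one genuinely new computation; everything else in the argument is insensitive to the value of $\alpha_0$ and goes through unchanged.
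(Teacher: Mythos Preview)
Your overall strategy is right, and the second half (from the integral estimate through Moser iteration to the pointwise bound) does transcribe unchanged. The gap is in your treatment of Lemma~\ref{lemma1}. You say it ``never used the scalar curvature ratio hypothesis, so it carries over unchanged,'' and that is literally true---but leaving it unchanged is exactly what breaks Lemma~\ref{lemma2}. With Lemma~\ref{lemma1} intact, the weight on the $|\nabla Rc|^2$-integral is $F^{a+1}$, and in Lemma~\ref{lemma2} term $II$ becomes
\[
2p\int_M|Rc|^2|Rm|^{p-1}F^{a+1}\phi^q\le cp\int_M|Rm|^{p-1}F^{a+1-\alpha_0}\phi^q;
\]
Young's inequality then produces the error $\int_M F^{a+(1-\alpha_0)p}$, which for your intended choice $a\approx(\alpha_0/2)p$ has exponent $(1-\alpha_0/2)p\to+\infty$ and diverges whenever $\alpha_0<2$. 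Term $IV$ has the same defect: after using $|Rc|^2\le CF^{-\alpha_0}$ the residual carries $F^{a+3-\alpha_0}>F^{a+1}$ and cannot be absorbed back into the left-hand side. So the ``window with $a/p\to\alpha_0/2$'' you hope to find does not exist with these weights.

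The paper's fix is to modify Lemma~\ref{lemma1} itself: set $\epsilon=\alpha_0/2$ and, at the Cauchy--Schwarz step for term~$I$ (and likewise for~$II$), split $F^a=F^{(a+\epsilon)/2}F^{(a-\epsilon)/2}$ instead of $F^{(a+1)/2}F^{(a-1)/2}$, producing weights $F^{a+\epsilon}$ and $F^{a-\epsilon}$ on the $|\nabla Rc|^2$ and $|\nabla Rm|^2$ integrals. Lemmas~\ref{lemma2} and~\ref{lemma3} are then rerun with these $\epsilon$-shifted weights: now $|Rc|^2F^{a+\epsilon}\le CF^{a-\epsilon}$ exactly, term~$IV$ absorbs since $|Rc|^2F^{a+3\epsilon}\le CF^{a+\epsilon}$, and the Young-error integrals become $\int_M F^{a-\epsilon p}$, finite precisely when $\epsilon p>a+n/2$. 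Taking $a=\epsilon p-(n/2+R_0+c+1)$ then gives $2(a-n)/p\to 2\epsilon=\alpha_0$, and the rest of your argument goes through verbatim.
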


\begin{proof} 
	For any $\alpha_0 \in (0,2]$, let $\epsilon := \frac{\alpha_0}{2}$, then $\epsilon \in (0,1]$. For $p > a+R_0+\frac{n}{2}+1$, by following the same argument as in Lemma \ref{lemma1} and using Lemma \ref{diffequation},  we have
	\begin{eqnarray*}
		[1-p^{-1}(a+R_0+\frac{n}{2})]\int_{M}\left\vert Rm \right\vert ^{p}F^{a}\phi ^{q} 
		&\leq & 4 \int_M |\na Rc|^2 |Rm|^{p-1} F^{a+\epsilon}\phi^q  \\
&& + \ cp^2 \int_{M} |\na Rm|^2 |Rm|^{p-3} F^{a-\epsilon} \phi ^{q} \\ 
&& + \ c(p).
	\end{eqnarray*}
	Also, note that the same argument as in the proofs of Lemma \ref{lemma2} and Lemma \ref{lemma3} give us the following: if $\epsilon p > a+\frac{n}{2}+1$, then we have
	\begin{eqnarray*}
		2 \int_M |\na Rc|^2 |Rm|^{p-1} F^{a+\epsilon}\phi^{q}
		&\leq & cp^3\int_M |\na Rm|^2|Rm|^{p-3}F^{a-\epsilon}\phi^{q} \\
		&&+ \frac{c}{p^2}\int_M |Rm|^pF^a\phi^q + c(p),
	\end{eqnarray*} 
	and
	\begin{eqnarray*}
		2\int_M |\na Rm|^2 |Rm|^{p-3}F^{a-\epsilon}\phi^{q}
		&\leq & \frac{c}{p^5}\int_M |Rm|^{p} F^{a}\phi^{q} + c(p).
	\end{eqnarray*}
	By combining the estimates above, we see that if $\epsilon p > a+\frac{n}{2}+1$ and $p>a+\frac{n}{2}+R_0+c$ then we have
	\begin{equation*}
		[1-p^{-1}(a+\frac{n}{2}+R_0+c)]\int_{M}\left\vert Rm \right\vert ^{p}F^{a}\phi ^{q} \leq c(p).
	\end{equation*}

	As in the proof of Theorem \ref{mainthm}, for any fixed $p>0$ large, there exists a constant $r_p>r_0$ such that
	\begin{equation*}
		\int_{M\backslash D(r_p)} |Rm|^pF^a \leq 1.
	\end{equation*}
	Therefore, by Lemma \ref{potencialfunction}, for any $x\in M\backslash D(r_p+1)$, we get
	\begin{equation*} 
		\int_{B_x(1)} |Rm|^p \leq c\left(r(x)+1\right)^{-2a}.
	\end{equation*}

	For any $p>0$ large, we take
	\begin{equation} \label{arelativetop3}
		a=\epsilon p-(\frac{n}{2}+R_0+c+1).
	\end{equation}
	Then by following the same proof as in Theorem \ref{mainthm}, we have
	\begin{eqnarray} \label{pointwisefor|Rm|3}
		|Rm|(x) &\leq&  C_0 \left( \int_{B_x(1)} u^n+1 \right)^\frac{1}{p} \left( \int_{B_x(1)} |Rm|^{p}\right)^\frac{1}{p} \notag \\
		&\leq& C_0 (r(x)+1)^{-\frac{2(a-n)}{p}}  
	\end{eqnarray}
	for $x \in M\backslash D(r_p+1)$. 
	
	We note that for any $\alpha \in (0,\alpha_0)$, when $p$ is sufficiently large, we have
	$$ \frac{a-n}{p} = \epsilon-\frac{\frac{n}{2}+R_0+c+1+n}{p}
	\geq \frac{\alpha}{2}. $$
	Now, for $\alpha ,\ p$, $a$ as above and any $x \in M\backslash D(r_p+1)$, 
by (\ref{pointwisefor|Rm|3}) we obtain
	\begin{eqnarray*} 
		|Rm|(x) &\leq&  C_0 (r(x)+1)^{-\frac{2(a-n)}{p}} \\
		&=& C_0 (r(x)+1)^{-\alpha}.
	\end{eqnarray*}

	Moreover, as in the proof of Theorem \ref{mainthm}, if we take $p=j \in {\mathbb N}$ and set 
	\[ \alpha_j= \frac{2(a-n)}{p} = \alpha_0-\frac{3n+2R_0+2c+2}{j} \to \alpha_0,\]
	then there exists a sequence $\{r_j\} \to \infty$  such that
	\begin{equation*} 
		|Rm|(x) \leq  C_0 (r(x)+1)^{-\alpha_j}
	\end{equation*}
	for any $x \in M\backslash D(r_j+1)$. 
	
	This completes the proof of Theorem \ref{subalphadecaythm}.
	
\end{proof}

\bigskip

\end{document}